\theoremstyle{definition}
\newtheorem{definition}{Definition}
\newtheorem{remark}{Remark}
\theoremstyle{theorem}
\newtheorem{lemma}{\bf Lemma}
\newtheorem{proposition}{\bf Proposition}
\newtheorem{theorem}{\bf Theorem}
\renewcommand\thesection{\Roman{section}}
	\renewcommand{\@secnumfont}{\bfseries}
    \def\section{\@startsection{section}{1}%
    \z@{.7\linespacing\@plus\linespacing}{.5\linespacing}%
    {\normalfont\bfseries\scshape \centering}}
	\def\subsection{\@startsection{subsection}{2}%
    \z@{.5\linespacing\@plus.7\linespacing}{.3\linespacing}%
    {\normalfont\bfseries}}
	\def\p@subsection{\thesection-}
\begin{document}

\let\MakeUppercase\relax

\title{Parametric robust structured control design}
\author[P. APKARIAN, M. N. DAO, D. NOLL]{P. Apkarian$^*$, M. N. Dao$^{\dag,\ddag}$, D. Noll$^\dag$}

\thanks{$^*$ Control System Department, ONERA, Toulouse, France}
\thanks{$^\dag$ Institut de Math\'ematiques de Toulouse, France}
\thanks{$^\ddag$ Hanoi National University of Education, Vietnam}

\maketitle

\begin{abstract}
We present a new approach to parametric robust controller design, where we
compute controllers of arbitrary order and structure  which 
minimize the worst-case $H_\infty$ norm over a pre-specified set of uncertain parameters. 
At the core of our method is a nonsmooth minimization method tailored to functions which are semi-infinite
minima of smooth functions.  
A rich test bench and a more detailed example illustrate the potential of the technique,
which can deal with complex problems involving multiple possibly repeated uncertain parameters.  

\vspace*{.2cm}
\noindent
{\bf Keywords.}  Real uncertain parameters $\cdot$ structured $H_\infty$-synthesis $\cdot$ 
parametric robust control
$\cdot$ nonsmooth optimization $\cdot$ local optimality $\cdot$ inner approximation 
\end{abstract}

\section{Introduction}
Parametric uncertainty is among the most challenging problems
in control system design due to its NP-hardness.  Albeit, being able to provide solutions to 
this fundamental problem is a must for  any practical design tool worthy of this attribute.
Not surprisingly, therefore, parametric uncertainty has remained high up on the agenda
of unsolved problems in control for the past three decades. 

It is of avail to distinguish between 
analysis and synthesis techniques for parametric robustness.
Analysis refers to assessing  robustness of a closed-loop system when the  controller
is already given.  If the question whether this given controller renders the closed loop
parametrically robustly stable is solved exhaustively, then
it is already an NP-hard problem \cite{TO95}.  Parametric robust synthesis, that is,
computing a controller which is robust against uncertain parameters,  is even harder, because it 
essentially involves an iterative procedure where at every step an analysis problem
is solved. Roughly, we could say that in parametric robust synthesis we have to
optimize a criterion, a single evaluation of which is already NP-hard.

For the analysis of parametric robustness, theoretical and practical tools with only mild conservatism
and acceptable CPUs have been proposed over the years \cite{PDB93}. 
In contrast, no tools with comparable merits in terms of quality and CPU
are currently available for synthesis.  It is fair to say  that the parametric robust synthesis problem has remained open.  
The best currently available techniques for synthesis are the $\mu$ tools going back to
\cite{BDGPS91}, made
available to designers through the MATLAB Robust Control Toolbox. 
These rely on upper bound relaxations of $\mu$ and follow a heuristic 
which alternates
between  analysis and synthesis steps. When it works,  it gives performance and stability certificates,
but the approach may turn out conservative,  
and the computed controllers
are often too complicated for practice. 

The principal obstruction to efficient robust synthesis is the inherent nonconvexity and nonsmoothness of the mathematical 
program
underlying the design. 
These obstacles have to some extent been overcome by the invention of the nonsmooth optimization techniques
 for control \cite{AN06a,AN06b,NPR08}, which we have applied successfully during recent years to multi-model structured
control design
\cite{GA13,Apk13,AN15,AN06a}. These have become available to designers
through synthesis tools like {\tt HINFSTRUCT} or {\tt SYSTUNE}.  
Here we initiate a new line of investigation, which addresses the substantially harder
parametric robust synthesis problem.

In order to understand our approach, it is helpful to
distinguish between inner and outer approximations of the robust control problem
on a set ${\bf \Delta}$ of
uncertain parameters. Outer approximations relax the  problem over 
${\bf \Delta}$ by choosing a larger, but more convenient, set $\widetilde{\bf \Delta}\supset {\bf \Delta}$, the idea being that 
the problem on $\widetilde{\bf \Delta}$  becomes accessible to computations. If solved successfully
on $\widetilde{\bf \Delta}$, this provides performance and robustness certificates for ${\bf\Delta}$.
Typical tools in this class are the upper bound approximation $\overline{\mu}$ of the structured singular value 
$\mu$ developed in \cite{FTD91},  the DK-iteration function {\tt DKSYN} of \cite{RCT2013b}, or LMI-based approaches
like  \cite{SK12,PA06}. 
The principal drawback of outer approximations is the inherent conservatism, which increases significantly with the number of 
uncertainties and their repetitions, and the fact that failures occur more often.
 
Inner approximations are preferred in practice and relax the problem by solving it on a smaller typically finite
subset ${\bf \Delta}_a \subset {\bf \Delta}$. This avoids conservatism and leads to acceptable CPUs,
but has the disadvantage that no immediate stability or performance certificate
for ${\bf \Delta}$ is obtained. Our principal contribution here is to show a way how this
shortcoming  can  be avoided or reduced. We present an efficient technique 
to compute an inner approximation with structured controllers with a local
optimality certificate in such a way  that robust stability and performance are achieved over ${\bf \Delta}$  in the majority of cases.
We then also show how this can be certified a posteriori over ${\bf \Delta}$, when combined with outer
approximation for analysis. The new method we propose is 
termed {\em dynamic inner approximation}, as it generates the inner approximating set ${\bf \Delta}_a$
dynamically. The idea of using inner approximations, and thus multiple models, to solve robust synthesis problems 
is not new and was employed in different contexts,  see e.g. \cite{NSGT99,MGC98,ABKSS93}. 

To address the parametric robust synthesis
problem we use a nonsmooth optimization method
tailored to minimizing a cost function,
which is itself a semi-infinite minimum of smooth functions. This is in contrast with 
previously discussed nonsmooth optimization problems, where a semi-infinite maximum of
smooth functions is minimized, and which have been dealt with successfully in \cite{AN15}. At the core of our new
approach is therefore understanding the 
principled difference between a min-max and a min-min problem, and the 
algorithmic strategies required to solve them successfully. Along with the new synthesis approach, our
key contributions are 
\begin{itemize}
\item an in-depth and rigorous  analysis of worst-case  stability and worst-case  performance problems over a compact parameter range.
\item the description of a new resolution algorithm for worst-case programs along with a proof of convergence in the general nonsmooth case.
\end{itemize}
Note that convergence  to local minima from an arbitrary, even remote, starting point is proved,  as
convergence to global minima is  not algorithmically feasible due to the NP-hardness of the problems.

The paper is organized as follows. Section \ref{sec:robust} states the problem
formally, and
subsection \ref{sec:dynamic} presents our novel dynamic inner approximation technique
and the elements needed to carry it out.
Section \ref{sec:highlight} highlights the principal differences between nonsmooth
min-min and min-max problems. Sections \ref{sec:hinf} and \ref{sec:alpha} 
examine the criteria which arise in the optimization programs, the $H_\infty$-norm,
and the spectral abscissa. Section \ref{sec:algorithm} presents the 
optimization method we designed for min-min problems and the
subsections \ref{conv:hinf}, \ref{conv:alpha} are dedicated to convergence analysis.
Section \ref{sec:testing} presents an assessment and a comparison of our algorithm
on a bench of test examples. Section \ref{sec:missile} gives a more
refined study of a challenging missile control problem.

\section*{Notation}
For complex matrices $X^H$ denotes conjugate transpose. For
Hermitian matrices, $X \succ 0$ means 
positive definite, $X \succeq 0$  positive
semi-definite.   We use concepts from
nonsmooth analysis covered by \cite{Cla83}. For a locally Lipschitz
function $f: \mathbb R^n \rightarrow \mathbb R$, $\partial f (x)$
denotes its (compact and convex) Clarke subdifferential at $x\in \mathbb R^n$. The Clarke directional derivative at $x$ in direction 
$d\in \mathbb R^n$ can be computed as 
\[
f^\circ(x,d)= \max_{g\in \partial f(x)} g^T d\,.
\]

The symbols $\mathcal F_l$, $\mathcal F_u$   denote lower and upper Linear Fractional Transformations (LFT) \cite{ZDG96}.
For partitioned $2\times 2$ block matrices,  $\star$ stands for the Redheffer star product \cite{Red60}.

\section{Parametric robustness}
\label{sec:robust} 
\subsection{Setup}
We consider an  LFT plant in  Fig. \ref{fig-LFT}  with real parametric uncertainties $\mathcal F_u(P, \Delta)$ where 
\begin{equation}\label{plant}
P(s):\left\{
\begin{matrix}
\dot{x}& =& Ax &+& B_pp   &+&B_w w& +& Bu \\
q& =& C_qx&+&D_{qp}p&+&D_{qw}w& +&D_{qu}u\\
z& =& C_zx& +&D_{zp}p & +&D_{zw}w &+ & D_{zu}u \\
y& =& Cx& +& D_{yp}p  &+&D_{yw}w   &+& D u \\
\end{matrix}
\right.
\end{equation}
and $x\in \mathbb R^{n_x}$ is the state, $u \in \mathbb R^{m_2}$ the  control,
$w \in \mathbb R^{m_1}$ the vector of exogenous inputs, 
$y \in \mathbb R^{p_2}$ the output,  and $z\in \mathbb R^{p_1}$ the regulated output.  
The uncertainty channel is defined as $p = \Delta q$ 
where the uncertain matrix
$\Delta$  is without loss assumed to have the block-diagonal  form
\begin{equation}
\label{matrix}
\Delta = {\rm diag}\left[ \delta_1 I_{r_1},\dots, \delta_m I_{r_m}\right]
\end{equation}
with $\delta_1,\dots,\delta_m$ representing real uncertain
parameters, and $r_i$ giving the number of repetitions of $\delta_i$. 
We assume without loss that $\delta = 0$  represents the nominal parameter value.
Moreover, we consider $\delta\in {\bf \Delta}$ in one-to-one correspondence with
the matrix $\Delta$ in (\ref{matrix}).

\begin{figure}[!ht]
\centering
\includegraphics[width = 0.5\columnwidth]{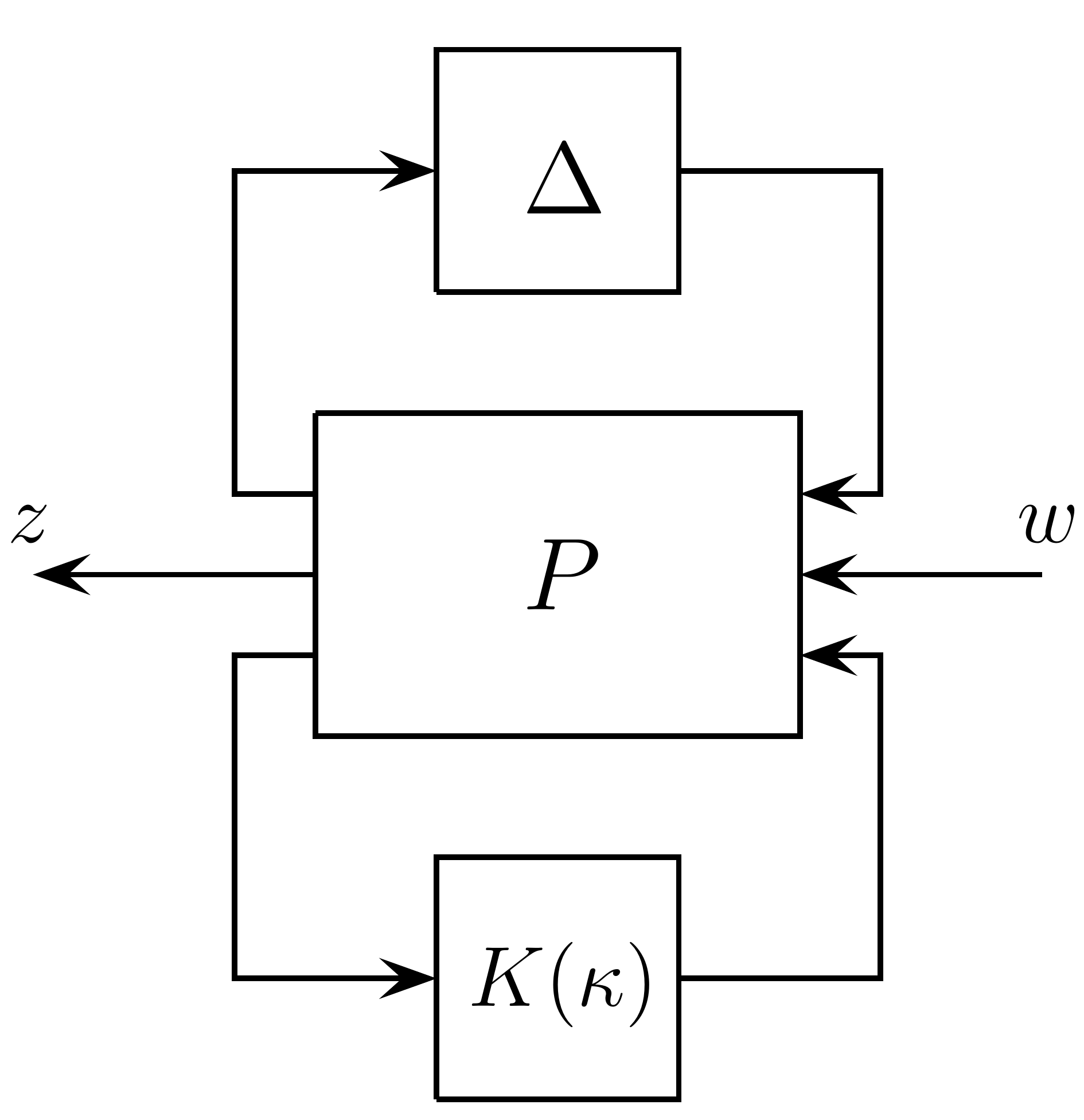}
\caption{Robust synthesis interconnection}
\label{fig-LFT}
\end{figure}

Given a compact convex set ${\bf \Delta}\subset \mathbb R^m$ containing $\delta=0$, the parametric robust
structured $H_\infty$ control problem consists in computing a structured output-feedback controller $u=K(\kappa^*)y$
with the following properties:
\begin{enumerate}
\item[(i)] {\bf Robust stability}. $K(\kappa^*)$ stabilizes $\mathcal F_u(P,\Delta)$ internally for every $\delta \in {\bf \Delta}$.
\item[(ii)] {\bf Robust performance}. Given any other robustly stabilizing controller $K(\kappa)$ with the same structure, 
the optimal controller satisfies 
\[
\displaystyle\max_{\delta\in {\bf \Delta}}\|T_{zw}\left( \delta,\kappa^*\right)\|_\infty \leq \max_{\delta\in {\bf \Delta}}
\|T_{zw}\left(\delta,\kappa\right)\|_\infty.
\]
\end{enumerate}
Here $T_{zw}(\delta,\kappa):=  \mathcal F_l\left( \mathcal F_u (P, \Delta(\delta)),\,K(\kappa) \right)$ denotes the closed-loop transfer function of the performance channel $w\to z$
of (\ref{plant}) when the control loop with controller $K(\kappa)$  and the uncertainty loop with uncertainty $\Delta$ are closed. 

We recall that according to \cite{AN06a} a controller 
\begin{equation}
\label{controller}
K(\kappa): \left\{
\begin{matrix}
\dot{x}_K\!\! &\! \!= \!\!& A_K(\kappa)x_K &\!\!+\!\! & \!\!B_K(\kappa)y \\ 
u\!\! &\!\! = \!\! & C_K(\kappa)x_K &\!\! +\!\! &\!\! D_K(\kappa)y 
\end{matrix}
\right.
\end{equation}
in state-space form
is called {\em structured}  if $A_K(\kappa),B_K(\kappa), \dots$ depend smoothly on a design parameter
$\kappa$ varying in a design space $\mathbb R^n$ or in some constrained subset  
of $\mathbb R^n$. Typical examples of structure include PIDs, reduced-order controllers, observer-based controllers, or 
complex control architectures  combining controller blocks such as set-point filters, feedforward,
washout or notch filters, and much else \cite{AN15}.  In contrast, full-order controllers are state-space 
representations with the same order as $P(s)$ without particular structure, and are sometimes referred to as {\em unstructured},
or as {\em black-box controllers}.

Parametric robust control is among the most challenging problems in linear
feedback control. The structured singular value $\mu$ developed in \cite{ZDG96}
is the principled theoretical tool to describe problem (i), (ii) formally. In the same vein,
based on the spectral abscissa
\[
\alpha(A)=\max\{{\rm Re}(\lambda): \lambda \text{ eigenvalue of } A\}
\]
of a square matrix $A$, criterion (i) may  be written as
\begin{equation}
\label{rob_alpha}
\max_{\delta\in {\bf \Delta}} \alpha \left( A(\delta,\kappa^*) \right) < 0,
\end{equation}
where $A(\delta,\kappa)$ is the  A-matrix of the closed-loop transfer function  $T_{zw}(\delta,\kappa)$. 

If the uncertain parameter set is a cube
${\bf \Delta} = [-1,1]^m$, which is general enough for applications, then the same
information is obtained from the distance to instability in the maximum-norm
\begin{equation}
\label{distance}
d^* = \min \{ \|\delta\|_\infty: \alpha \left( A(\delta,\kappa^*)  \right) \geq 0\},
\end{equation}
because
criterion (i) is now equivalent to $d^* \geq 1$. 
It is known that the computation of any of these elements, $\mu$, (\ref{rob_alpha}),
or (\ref{distance}) is NP-complete, so that their practical use is limited to  analysis of small problems, 
or to the synthesis of tiny ones. 
Practical approaches have to rely on intelligent relaxations, or
{\em heuristics}, which use either inner or outer approximations.

In the next chapters we will develop our
{\em dynamic inner approximation} method to address 
problem (i), (ii).   We solve the  problem on a 
relatively small  set
${\bf \Delta}_a \subset {\bf \Delta}$, which we construct iteratively.

\subsection{Dynamic inner approximation}
\label{sec:dynamic}
The following {\em static} inner approximation to (i), (ii) is
near at hand. After fixing a sufficiently fine approximating static  grid
${\bf \Delta}_s \subset {\bf \Delta}$,  one solves the multi-model $H_\infty$-problem
\begin{equation}
\label{static}
\min_{\kappa \in\mathbb R^n} \max_{\delta \in{\bf \Delta}_s }\|T_{zw}\left( \delta,\kappa\right)\|_\infty .
\end{equation}
This may be addressed with recent 
software tools like {\tt HINFSTRUCT} and {\tt SYSTUNE}, cf. \cite{RCT2013b},  or {\tt HIFOO}  \cite{BHLO06}, but  
has a high computational burden
due to the large number of scenarios in ${\bf \Delta}_s$, which makes it prone to failure. 
Straightforward gridding becomes very quickly intractable for sizable dim$(\delta)$.

Here we advocate a different strategy,
which we call {\em dynamic} inner approximation, because it operates on a substantially smaller 
set ${\bf \Delta}_a\subset {\bf \Delta}$ generated dynamically, whose elements are called the {\em active scenarios}, which
we update a couple of times by applying a search procedure locating problematic parameter scenarios in ${\bf \Delta}$.   
This  leads to a rapidly converging procedure,  much less prone to failure than (\ref{static}).
The method can be summarized as shown in Algorithm \ref{algo1}.

\begin{algorithm}[!ht]
\caption{Dynamic inner approximation for parametric robust synthesis over ~${\bf \Delta}$}\label{algo1}
\begin{algo}
\PARAMETERS $\varepsilon >0$.
\STEP{Nominal synthesis} 
Initialize the set of active scenarios as ${\bf \Delta}_a = ~\{0\}$. 
\STEP{Multi-model synthesis}\label{syn} 
Given the current  finite set ${\bf \Delta}_a \subset {\bf \Delta}$ of active scenarios, compute
a structured multi-model $H_\infty$-controller by solving
\[
v_*=
\min_{\kappa\in \mathbb R^n} \max_{\delta \in {\bf \Delta}_a} \|T_{zw}\left( \delta,\kappa\right)\|_\infty.
\]
The solution is the structured $H_\infty$-controller $K(\kappa^*)$.
\STEP[$\diamond$]{Destabilization} 
Try to destabilize the closed-loop system $T_{zw}\left( \delta,\kappa^*\right)$  by solving the
destabilization problem
\[
\alpha^* = \max_{\delta \in {\bf \Delta} } \alpha \left( A(\delta,\kappa^*) \right).
\]
If {$\alpha^* \geq 0$}, then the solution $\delta^*\in {\bf \Delta}$ destabilizes the loop. 
		Include $\delta^*$ in the active scenarios ${\bf \Delta}_a$ and {go back to} step \ref{syn}.
	If no destabilizing $\delta$ was found
	then {go to} step \ref{perform}.
\STEP{Degrade performance}\label{perform}  
Try to degrade the robust $H_\infty$-performance by solving
\[
v^*=\max_{\delta\in {\bf \Delta}} \|T_{zw} \left(\delta, \kappa^*  \right)\|_\infty.
\]
The solution is $\delta^*$. 
\STEP[$\diamond$]{Stopping test}
If {$v^*< (1+\varepsilon)v_* $}   degradation of performance is only marginal.  Then 
		{exit}, or optionally, {go to} step \ref{post-pro} for post-processing. Otherwise
	 include $\delta^*$ among the active scenarios ${\bf \Delta}_a$ and {go back to} step \ref{syn}.
\STEP[$\diamond$]{Post-processing}\label{post-pro} 
Check robust stability (i) and performance (ii) of $K(\kappa^*)$ over ${\bf \Delta}$ by computing
the distance $d^*$ to instability (\ref{distance}), and its analogue
\(
h^*=\min\{\|\delta\|_\infty: \|T_{zw}(\delta,\kappa^*)\|_\infty \geq v^*\}.
\)
	Possibly use $\mu$-tools from \cite{RCT2013b} to assess $d^*,h^*$
approximately. 
	If {all $\delta^*$ obtained satisfy $\delta^*\not\in  {\bf \Delta}$},
		then {terminate} successfully.
\end{algo}
\end{algorithm}

The principal
elements of Algorithm \ref{algo1} will be analyzed in the following sections. We will focus on the
optimization programs $v^*$ in step 4,  $\alpha^*$ in step 3, and $d^*$, $h^*$ in step 6,
which represent a relatively unexplored  type of nonsmooth programs, with some common features  which we shall put into evidence here. 
In contrast, program
$v_*$ in step 2 is accessible to numerical methods through the work \cite{AN06a} and can be addressed
with tools like {\tt HINFSTRUCT} or {\tt SYSTUNE} available through \cite{RCT2013b}, or {\tt HIFOO} available through \cite{BHLO06}. 
Note that our approach is heuristic in so far as 
we have relaxed (i) and (ii) by computing locally optimal solutions,
so that a global stability/performance  certificate is only provided in the end as a result of  step 6.

\section{Nonsmooth min-max versus min-min programs}
\label{sec:highlight}

\subsection{Classification of the programs in Algorithm \ref{algo1}}
Introducing the functions $a_\pm(\delta) = \pm \alpha\left( A(\delta) \right)$,
the problem of step
3 can be equivalently written in the form
\begin{equation}
\label{minus_alpha}
\begin{array}{ll}
\text{minimize} & a_-(\delta) = - \alpha\left( A(\delta) \right)\\
\text{subject to} & \delta \in {\bf \Delta}
\end{array}
\end{equation}
for a matrix $A(\delta)$ depending smoothly on the parameter $\delta\in \mathbb R^m$.  
Here the dependence of the matrix on controller $K(\kappa^*)$ is omitted for simplicity,
as the latter is fixed in step 3 of the algorithm.
Similarly, if we introduce $h_\pm(\delta)=\pm \| G(\delta)\|_\infty$,  with $G(s,\delta)$ a transfer function depending smoothly
on $\delta\in\mathbb R^m$, then
problem of step 4 has the abstract form
\begin{equation}
\label{minus_h}
\begin{array}{ll}
\text{minimize} & h_-(\delta) = - \| G(\delta)\|_\infty\\
\text{subject to}& \delta \in {\bf \Delta}
\end{array}
\end{equation}
where again controller $K(\kappa^*)$ is fixed in step 4, and therefore
suppressed in the notation.
In contrast,  the $H_\infty$-program $v_*$  in step 2 of Algorithm \ref{algo1}  
has the form
\begin{equation}
\label{h_plus}
\begin{array}{ll}
\text{minimize} & h_+(\kappa) = \|G(\kappa)\|_\infty \\
\text{subject to}& \kappa \in \mathbb R^n
\end{array}
\end{equation}
which is of the more familiar
 min-max type.  Here we use the well-known fact that the $H_\infty$-norm may be written as a 
 semi-infinite maximum function 
 $h_+(\kappa)=\max_{\omega\in [0,\infty]} \overline{\sigma}\left( G(\kappa,j\omega) \right)$.  The maximum
 over the finitely many $\delta \in {\bf \Delta}_a$ in step 2 complies with this structure 
 and may in principle be condensed into
 the form (\ref{h_plus}), featuring only a single transfer $G(s,\kappa)$.  In practice this is  treated as
 in \cite{AN06b}.
  
 Due to the minus sign, 
 programs (\ref{minus_alpha})
and (\ref{minus_h}), written in the minimization form, are now of the novel min-min type, which is given special attention here. This difference is made precise by the following
\begin{definition}
[Spingarn \cite{Spi81}, Rockafellar-Wets \cite{RW98}]
A locally Lipschitz function $f:\mathbb R^n \to \mathbb R$ is lower-$C^1$ at  $x_0\in \mathbb R^n$
if there exist a compact space $\mathbb K$, a neighborhood $U$ of $x_0$, and a mapping $F:\mathbb R^n \times \mathbb K \to \mathbb R$ such that
\begin{equation}
\label{lower}
f(x) = \max_{y\in\mathbb  K} F(x,y)
\end{equation}
for all $x \in U$, and $F$ and $\partial F/\partial x$ are jointly continuous. The function $f$ is said to be upper-$C^1$  if
$-f$ is lower-$C^1$.
\hfill $\square$
\end{definition}

We expect  upper- and lower-$C^1$ functions to behave 
quite differently in descent algorithms. 
Minimization of lower-$C^1$ functions, as required in (\ref{h_plus}),  should lead to a genuinely nonsmooth problem,  
because iterates of a descent method move toward the points of nonsmoothness.
In contrast,
minimization of upper-$C^1$ functions as required in (\ref{minus_alpha}) and (\ref{minus_h}) is expected
to be  better behaved, because iterates move away from
the nonsmoothness. Accordingly, we will want to minimize  upper-$C^1$ functions  in (\ref{minus_alpha}) and (\ref{minus_h})
in much the same way as
we optimize smooth functions in classical nonlinear programming, whereas
the minimization of lower-$C^1$ functions in (\ref{h_plus}) requires specific techniques like 
nonconvex  bundle methods \cite{ANP08,ANP07}. See Fig. \ref{fig-Surface} for an illustration. 

\begin{remark}[Distance to instability]
Note that the computation of the distance to instability $d^*$ defined in  (\ref{distance}) for 
 step $6$ of Algorithm \ref{algo1} has also the features of a min-min optimization program. 
Namely, when written in the form
\begin{equation}
\label{newdist}
\begin{array}{ll}
\text{minimize} & t\\
\text{subject to} & -t \leq \delta_i \leq t,\; i=1,\dots,m\\
&-\alpha\left( A(\delta)\right)\leq 0
\end{array}
\end{equation}
with variable $(\delta,t)\in \mathbb R^{m+1}$,
the Lagrangian of  (\ref{distance}) is
\[
L(\delta,t,\lambda,\mu_\pm)=
t +  \sum_{i=1}^m \mu_{i-} \left( -t-\delta_i \right) + \mu_{i+} (\delta_i-t) -\lambda \alpha\left( A(\delta)  \right)  
\]
for  Lagrange multipliers $\lambda \geq 0$ and $\mu_\pm \geq 0$. In particular, if $(\delta^*,t^*,\lambda^*,\mu_\pm^*)$ is 
a Karush-Kuhn-Tucker (KKT) point \cite{Cla83} of (\ref{newdist}), then the local minimum $(\delta^*,t^*)$ we are looking for is 
also a critical point of the unconstrained program
\[
\min_{\delta\in \mathbb R^m, t\in \mathbb R} L(\delta,t,\lambda^*,\mu_\pm^*),
\]
which features the function $a_-$ and is therefore of min-min type. Therefore, in solving
(\ref{distance}), we expect phenomena of min-min type to surface rather than those of
a min-max program.  A similar comment applies to the computation of $h^*$ in step $6$
of the algorithm.
\end{remark}

\begin{remark}[Well-posedness]
Yet another aspect of Algorithm \ref{algo1}
is that in order to be robustly stable over the parameter set ${\bf \Delta}$, the 
LFTs must be well-posed in the sense that $(I-\Delta\mathcal D)^{-1}$ exists
for every $\delta\in  {\bf \Delta}$, where $\mathcal D$ is the closed-loop D-matrix.
Questioning well-posedness could therefore be included in step 3 of the algorithm,
or added as posterior testing in step 6.
It can be formulated as yet another  min-min program
\begin{equation}\label{eq-wp}
\begin{array}{ll}
\text{minimize} &  -\overline{\sigma}((I- \Delta \mathcal D)^{-1})\\
\text{subject to}& \delta \in {\bf \Delta}
\end{array}
\end{equation}
where one would diagnose the solution $\delta^*$
to represent an ill-posed scenario as soon as  it 
achieves a large  negative value.
Program (\ref{eq-wp}) exhibits the same properties as minimizing $h_-$ in section \ref{sec:hinf} 
and is handled with the same techniques.

For programs $v^*$ in step 4,  $\alpha^*$ in step 3, and $d^*$, $h^*$ in step 6 of Algorithm \ref{algo1},   
well-posedness (\ref{eq-wp}) is a prerequisite.
However, we have observed that it may not be necessary to question well-posedness over ${\bf \Delta}$ at every step,
since questioning stability over ${\bf \Delta}$ has a similar effect. Since the posterior certificate in step 6
of the algorithm covers also well-posedness, this is theoretically justified. 
\end{remark}

\begin{remark}
Our notation makes it easy for the reader to distinguish between
min-min and min-max  programs. Namely, minimizations over the controller variable
$\kappa$ turn out  the min-max ones, while minimizations over the uncertain parameters $\delta$
lead to the min-min type.  
\end{remark}

\subsection{Highlighting the difference between min-max and min-min}
In this section we look at the typical difficulties 
which surface in min-max and min-min programs.  This is crucial for the understanding of our
algorithmic approach. Consider first a min-max program of the form
\begin{equation}\label{eq-minmax}
\min_{\kappa\in\mathbb R^n} \max_{i\in I} f_i(\kappa),
\end{equation}
where the $f_i$ are smooth.
When the set $I$ is finite, we may simply dissolve this into a classical nonlinear programming (NLP)
using one additional dummy variable $t\in\mathbb R$:
\[
\begin{array}{ll}
\text{minimize} & t\\
\text{subject to} &  f_i(\kappa)\leq t,\; i\in I.
\end{array}
\]
The situation becomes more complicated as soon as the set $I$ is infinite, as is
for instance the case in program $v_*$ in step 2 of Algorithm \ref{algo1}. The typical difficulty in min-max programs is to deal
with this semi-infinite character, and one is beholden to use a tailored solution, as for instance 
developed in \cite{AN06a,ANP07,ANP08}. Altogether this type of difficulty is well-known and has been thoroughly studied.

In contrast, a min-min program
\begin{equation}\label{eq-maxmin}
\min_{\delta\in \mathbb R^n} \min_{i\in I} f_i(\delta)
\end{equation}
cannot be converted into an NLP even when $I$ is finite. The problem has disjunctive
character, and if solved to global optimality, min-min programs lead to combinatorial explosion.
On the other hand, a min-min problem has some favorable features when
it comes to solely finding a good local minimum. Namely,
when meeting a nonsmooth iterate  $\delta^j$, where several branches $f_i$ are active,
we can simply pick one of those branches and continue optimization
as if the objective function were smooth. In the subsequent sections we prove that this
intuitive understanding is indeed correct.  Our experimental section will show that good results are obtained if a good heuristic is used. 

The above considerations lead us to introduce the notion of active indices and branches for functions 
$f(\delta)$ defined by the inner $\max$ and $\min$ in (\ref{eq-minmax}) and
(\ref{eq-maxmin}). 

\begin{definition}
{\rm
The set of active indices for $f$  at $\delta$ is defined as
\[
I(\delta):= \left\{ i \in I: f_i(\delta) = f(\delta) \right\}\,.
\]
Active branches of $f$ at $\delta$ are those corresponding to active indices, i.e, $f_i$,  $i \in I(\delta)$.
}
\end{definition}

\begin{figure}[h!]
\centering
\includegraphics[width = 0.5\columnwidth]{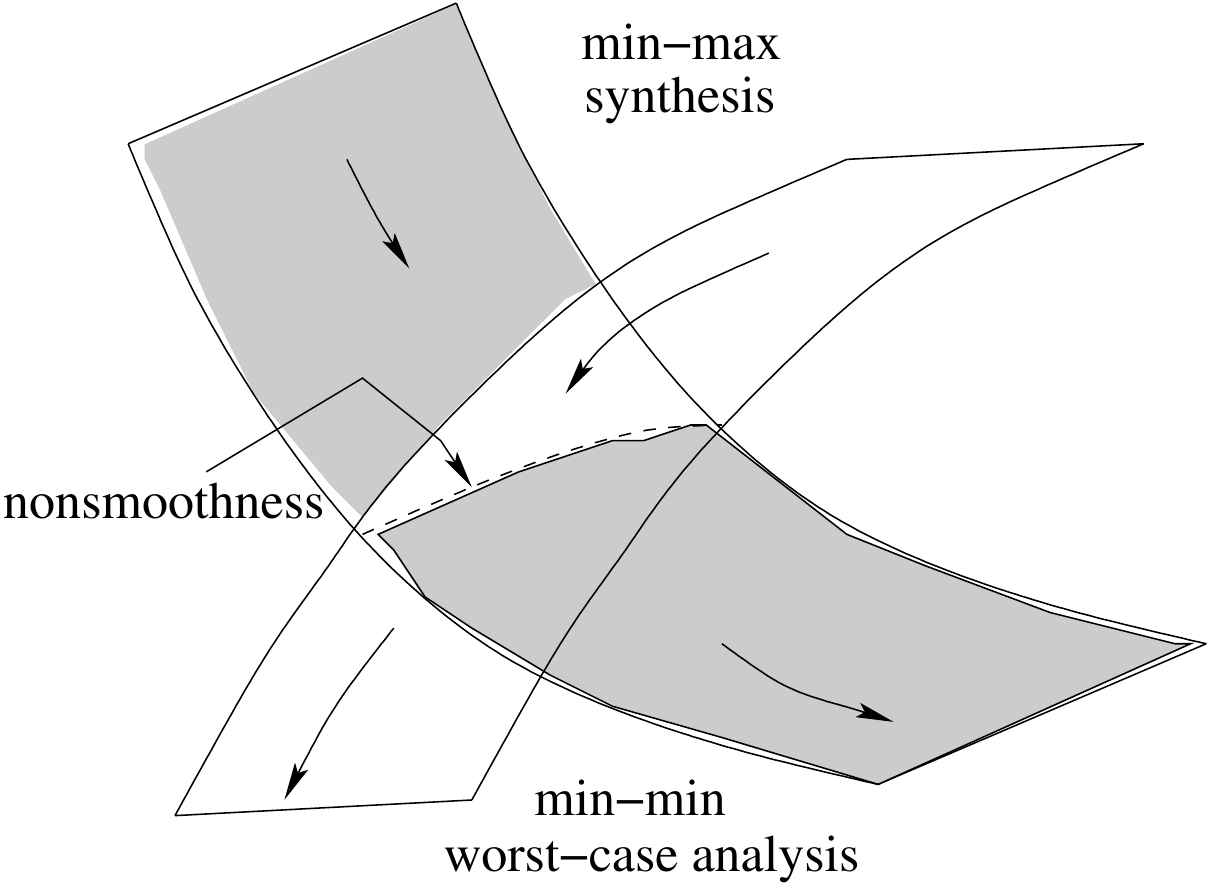}
\caption{Min-max versus min-min programs}
\label{fig-Surface}
\end{figure}

\section{Computing subgradients}
In this section we briefly discuss how the subgradient information needed 
to minimize $h_-$ and $a_-$ is computed.

\subsection{Case of the $H_\infty$-norm}
\label{sec:hinf}
We start by investigating the case of the $H_\infty$-norm $h_\pm$. 
We recall that function evaluation is based on the Hamiltonian algorithm
of \cite{BBK89,BBal90} and its further developments \cite{BSV12}.
Computation of subgradients of $h_-$  in the sense of Clarke can be adapted from
\cite{AN06a}, see also \cite{BB91}. We assume the controller is fixed in this section and
investigate the  properties of $h_-$ as a function of $\delta$.  To this aim, the controller loop is closed 
by substituting the structured controller (\ref{controller}) in (\ref{plant}), and we obtain the transfer function
$M(\kappa):= \mathcal F_l(P,K(\kappa))$. Substantial simplification in Clarke subdifferential computation is 
then obtained by defining the $2\times2$-block transfer function

\begin{equation}\label{eq-starD}
\left[\begin{array}{cc} \ast & T_{qw}(\delta) \\T_{zp}(\delta) & T_{zw}(\delta) \end{array}\right]:= \left[\begin{array}{cc}0 & I \\I & \Delta\end{array}\right] \star M \,,
\end{equation}
where  the dependence on $\kappa$ has now been suppressed, as the controller will
be fixed to $\kappa^*$ after step 2. 
It is readily seen that $T_{zw}$ coincides with the closed-loop transfer function where both controller and uncertainty loops are closed.

Now consider the function $h_-(\delta) := -  \|T_{zw}(\delta)\|_\infty$, 
which is well defined on its domain $\mathbb D := \{\delta \in \mathbb R^m: T_{zw}(\delta) \text{ \rm is internally stable} \}$.
We have the following

\begin{proposition}
\label{prop1}
The function $h_-$  is everywhere Clarke subdifferentiable on $\mathbb D$.
The Clarke subdifferential at $\delta\in \mathbb D$ is the compact and convex set
\begin{eqnarray*}
\partial h_-(\delta) = \bigg\{ \phi_Y: Y =(Y_\omega), \omega\in \Omega(\delta),  \,
Y_\omega \succeq 0, \qquad \\
\textstyle \sum_{\omega\in \Omega(\delta)} {\rm Trace}(Y_\omega)=1 \bigg\}\,,
\end{eqnarray*}
where
the $i$-th entry of $\phi_Y$  is ${\rm Trace}\left( \Delta_i^T \Phi_Y\right)$ with $\Delta_i 
=\partial \Delta/\partial \delta_i$, and
\[
\Phi_Y = - \sum_{\omega\in \Omega(\delta)} {\rm Re}
\left( T_{qw}(\delta,j\omega) P_\omega Y_\omega Q_\omega^H T_{zp}(\delta,j\omega) \right)^T.
\]
Here $\Omega(\delta)$ is  the set of active frequencies at $\delta$, $Q_\omega$ is a matrix whose columns are the left singular vectors associated
with the maximum singular value  of $T_{zw}(\delta,j\omega)$, 
$P_\omega$ is the corresponding matrix of right singular vectors, and $Y_\omega$ is an Hermitian matrix of appropriate size.
\end{proposition}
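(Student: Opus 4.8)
The plan is to realize $\|T_{zw}(\delta)\|_\infty$ as a \emph{single} maximum of smooth branches over a compact index set, so that the machinery for lower-$C^1$ functions applies. On the open stability domain $\mathbb D$ I would write, combining the frequency-sweep characterization of the $H_\infty$-norm with the variational formula for the largest singular value,
\[
\|T_{zw}(\delta)\|_\infty = \max_{(\omega,u,v)\in\mathbb K}\; F(\delta,(\omega,u,v)), \qquad F(\delta,(\omega,u,v)) = {\rm Re}\big( u^H T_{zw}(\delta,j\omega)\, v\big),
\]
where $\mathbb K = [0,\infty]\times\{(u,v):\|u\|=\|v\|=1\}$ is compact after one-point compactification of the frequency axis. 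Since $F$ and $\partial F/\partial\delta$ are jointly continuous on $\mathbb D\times\mathbb K$, this exhibits $\|T_{zw}(\cdot)\|_\infty$ as lower-$C^1$, hence Clarke regular, which is exactly what upgrades the generic inclusion to the asserted equality and licenses $\partial h_- = \partial(-\|T_{zw}\|_\infty) = -\,\partial\|T_{zw}\|_\infty$, carrying the overall minus sign in $\Phi_Y$.

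For a regular max-function Clarke's calculus gives $\partial\|T_{zw}(\cdot)\|_\infty(\delta) = {\rm conv}\{\nabla_\delta F(\delta,\cdot): (\omega,u,v)\text{ active}\}$. The active compound indices are precisely the active frequencies $\omega\in\Omega(\delta)$ paired with the maximizing singular directions $(u,v)$, which at each such $\omega$ range over the top left/right singular subspaces spanned by the columns of $Q_\omega$ and $P_\omega$. Writing an active pair as $u=Q_\omega c$, $v=P_\omega c$ with $\|c\|=1$, the rank-one matrices $v u^H = P_\omega\,(cc^H)\,Q_\omega^H$ generate, under convex combination, all $P_\omega Y_\omega Q_\omega^H$ with $Y_\omega\succeq 0$; taking the outer convex hull over $\Omega(\delta)$ distributes a convex weighting across frequencies, and absorbing the weights into the blocks collapses everything into the single normalization $\sum_{\omega\in\Omega(\delta)}{\rm Trace}(Y_\omega)=1$.

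The remaining ingredient is the chain rule that converts a gradient in $T_{zw}$ into one in $\delta$. Here I would invoke the star-product sensitivity identity
\[
\frac{\partial T_{zw}}{\partial\delta_i}(\delta,j\omega) = T_{zp}(\delta,j\omega)\,\Delta_i\,T_{qw}(\delta,j\omega), \qquad \Delta_i=\partial\Delta/\partial\delta_i,
\]
which is exactly why the auxiliary blocks $T_{qw}$, $T_{zp}$ were split off in (\ref{eq-starD}); it follows from differentiating $\mathcal F_u(M,\Delta)$ and using the push-through identity on $(I-M_{11}\Delta)^{-1}$. Substituting this into $\nabla_\delta F$ and using cyclicity of the trace gives the $i$-th component ${\rm Re}\,{\rm Trace}(\Delta_i\, T_{qw}P_\omega Y_\omega Q_\omega^H T_{zp})$; since $\Delta_i$ is real this equals ${\rm Trace}\big(\Delta_i^T(\,{\rm Re}(T_{qw}P_\omega Y_\omega Q_\omega^H T_{zp})\,)^T\big)$, and summing over active frequencies with the inherited minus sign reproduces $\phi_Y$ and $\Phi_Y$ exactly. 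Compactness and convexity of the displayed set are then immediate, being the affine image of the compact convex set of admissible $(Y_\omega)$.

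I expect the main obstacle to be the honest justification of \emph{equality} in the two coupled layers of maximization. One must verify that forming the convex hull simultaneously over the active frequencies $\Omega(\delta)$ and over the singular-subspace directions encoded by the $Y_\omega$ yields the \emph{entire} Clarke subdifferential, not merely a containing set; this rests on the joint continuity hypothesis and on Clarke regularity of lower-$C^1$ functions, and on confirming that $\Omega(\delta)$ is (generically) finite so that the outer hull is a genuine finite convex combination. A secondary but delicate point is the bookkeeping of real parts and transposes needed to land the parametrization precisely in the stated trace form $\phi_Y = ({\rm Trace}(\Delta_i^T\Phi_Y))_i$.
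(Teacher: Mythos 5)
Your argument is correct, but it takes a much more self-contained route than the paper, whose proof is essentially a two-line citation: the authors obtain $\partial h_-$ from the elementary rule $\partial(-h)=-\partial h$ together with the known formula for the Clarke subdifferential of the $H_\infty$-norm with respect to the \emph{controller} variable from \cite{AN06a}, remarking only that the Redheffer star product with the controller block must be replaced by the analogous expression (\ref{eq-starD}) for the upper ($\Delta$-) loop. You instead reconstruct the content of that cited result from scratch: the representation of $\|T_{zw}(\delta)\|_\infty$ as a maximum of ${\rm Re}\left(u^HT_{zw}(\delta,j\omega)v\right)$ over a compact index set, Clarke regularity of lower-$C^1$ functions to upgrade the max-rule inclusion to an equality, identification of the active pairs as $u=Q_\omega c$, $v=P_\omega c$ with $\|c\|=1$, the passage from ${\rm conv}\{cc^H:\|c\|=1\}$ to $\{Y_\omega\succeq 0,\ \sum_\omega{\rm Trace}(Y_\omega)=1\}$, and the LFT sensitivity $\partial T_{zw}/\partial\delta_i=T_{zp}\Delta_iT_{qw}$, which is indeed the reason the blocks $T_{qw}$, $T_{zp}$ are split off in (\ref{eq-starD}). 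Your approach buys a verifiable, reference-free proof (and, as a bonus, your compact-max representation is exactly the device the paper deploys separately in (\ref{4sup}) to prove that $h_+$ is lower-$C^1$, so your argument unifies the two propositions); the paper's version buys brevity at the cost of asking the reader to transpose a controller-space computation to the uncertainty loop. The two caveats you flag are real but benign: for a non-constant real-rational $\overline{\sigma}(T_{zw}(\delta,j\cdot))$ the active set $\Omega(\delta)$ is finite, so by Carath\'eodory the outer hull is a genuine finite convex combination, and for real $\Delta_i$ the identity ${\rm Re}\,{\rm Trace}(\Delta_iX)={\rm Trace}\bigl(\Delta_i^T({\rm Re}\,X)^T\bigr)$ lands exactly on the stated form of $\phi_Y$ and $\Phi_Y$.
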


\begin{proof}
Computation of the Clarke subdifferential of $h_-$ can be obtained from the general
rule $\partial (-h)=-\partial h$, and knowledge of $\partial h_+$, see \cite{AN06a}. 
Note  that in that reference the Clarke subdifferential is with respect to the controller  
and relies therefore  on 
the Redheffer star product 
\[
P \star \left[\begin{array}{cc}K(\kappa) & I \\I & 0\end{array}\right]\,.
\]
Here we apply this in the
upper loop in $\Delta$, so we have to use the analogue expression (\ref{eq-starD}) instead. 
\end{proof}

\begin{remark}
In the case where a single frequency $\omega_0$ is active at $\delta$ and
the maximum singular value  $\overline\sigma$  of $T_{zw}(\delta,j\omega_0)$ has multiplicity 1,
$h_-$ is differentiable at $\delta$ and the gradient is  
\[
\frac{\partial h_-(\delta)}{\partial\delta_i} 
= - {\rm Trace} \, {\rm Re} \, \left( T_{qw}(\delta, j\omega_0) p_{\omega_0 }q_{\omega_0 }^H T_{zp}(\delta,j\omega_0)\right)^T \Delta_i, 
\]
where $p_{\omega_0 }$ and $q_{\omega_0 }$ are the unique right and left singular vectors of 
$T_{zw}(\delta,j\omega_0)$ associated with $\overline{\sigma}(T_{zw}(\delta,j\omega_0))=h_+(\delta)$.
\end{remark}

\begin{proposition}
Let $\mathbb D=\{\delta: T_{zw}(\delta) \text{ \rm  is internally stable}   \}$. Then   $h_+: \delta \mapsto \|T_{zw}(\delta)\|_\infty$
is lower-$C^1$ on   $\mathbb D$, so that $h_-:\delta \mapsto -\|T_{zw}(\delta)\|_\infty$ is upper-$C^1$ there.
\end{proposition}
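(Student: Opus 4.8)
The claim is that $h_+(\delta) = \|T_{zw}(\delta)\|_\infty$ is lower-$C^1$ on $\mathbb D$; the upper-$C^1$ statement for $h_-$ then follows immediately from the definition, since $h_- = -h_+$. So the plan is to verify the three ingredients of the lower-$C^1$ definition directly: exhibit a compact parameter space $\mathbb K$, a representation $h_+(\delta) = \max_{y\in\mathbb K} F(\delta,y)$ valid on a neighborhood of each fixed $\delta_0\in\mathbb D$, and the joint continuity of $F$ and $\partial F/\partial\delta$.

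\textbf{Step 1: the semi-infinite max representation.} The starting point is the standard fact, already recalled in the excerpt, that the $H_\infty$-norm is a semi-infinite maximum over frequencies, $h_+(\delta) = \max_{\omega\in[0,\infty]} \overline\sigma\bigl(T_{zw}(\delta,j\omega)\bigr)$. The natural candidate is therefore $\mathbb K = [0,\infty]$ (the one-point compactification, which is compact) and $F(\delta,\omega) = \overline\sigma\bigl(T_{zw}(\delta,j\omega)\bigr)$. First I would fix $\delta_0\in\mathbb D$ and choose a neighborhood $U$ of $\delta_0$ on which internal stability persists, which is possible because $\mathbb D$ is open (stability is an open condition on the closed-loop poles, which depend continuously on $\delta$). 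On such a $U$ the transfer function $T_{zw}(\delta,\cdot)$ has no poles on the imaginary axis, so $F$ is well-defined and the frequency $\omega=\infty$ poses no difficulty because $T_{zw}$ is proper.

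\textbf{Step 2: smoothing out the maximum singular value.} The obstacle is that $\overline\sigma$ is only Lipschitz, not smooth, at frequencies where the top singular value is repeated, so $F(\delta,\omega)=\overline\sigma(T_{zw}(\delta,j\omega))$ itself need not be $C^1$ in $\delta$, and the definition requires $\partial F/\partial\delta$ to exist and be jointly continuous. The remedy, which is the real content of the argument, is to replace the scalar index $\omega$ by a \emph{pair} $(\omega,v)$, where $v$ ranges over unit vectors, and write $\overline\sigma(T_{zw}(\delta,j\omega)) = \max_{\|v\|=1} v^H T_{zw}(\delta,j\omega)^H T_{zw}(\delta,j\omega)\, v$ under a square root, or more conveniently work with the Rayleigh-quotient form $\overline\sigma(G) = \max_{\|u\|=\|v\|=1}\mathrm{Re}\,(u^H G v)$. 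Setting $\mathbb K = [0,\infty]\times\{(u,v):\|u\|=\|v\|=1\}$, which is compact as a product of compacts, and $F(\delta,(\omega,u,v)) = \mathrm{Re}\,\bigl(u^H T_{zw}(\delta,j\omega) v\bigr)$ gives $h_+(\delta)=\max_{\mathbb K}F$, and now $F$ is a smooth (indeed real-analytic in $\delta$ away from poles) bilinear form in the singular-vector variables. This is the standard device by which $H_\infty$-type functions are exhibited as lower-$C^1$, and is exactly the structure underlying the subdifferential formula of Proposition \ref{prop1}.

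\textbf{Step 3: joint continuity.} What remains is to confirm that $F$ and $\partial F/\partial\delta$ are jointly continuous on $U\times\mathbb K$. Here $\partial F/\partial\delta$ is computed by differentiating the entries of $T_{zw}(\delta,j\omega)$ with respect to $\delta$ through the star-product construction \eqref{eq-starD}; since $M$ is fixed and $\Delta=\Delta(\delta)$ depends smoothly (in fact affinely) on $\delta$, and since the relevant inverse $(I-\Delta\mathcal D)^{-1}$ exists and is smooth wherever well-posedness holds, each entry of $T_{zw}(\delta,j\omega)$ is jointly continuous in $(\delta,\omega)$ together with its $\delta$-derivatives on the stability region. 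Joint continuity of $F$ and its $\delta$-gradient then follows because they are built from these entries and the bounded singular-vector variables $(u,v)$ by continuous algebraic operations. I expect Step 2 — correctly encoding $\overline\sigma$ as a genuine maximum of $C^1$-in-$\delta$ branches via the singular-vector variables, so that the nonsmoothness is pushed entirely into the $\max$ and out of the individual branches $F(\cdot,y)$ — to be the crux; Steps 1 and 3 are then routine openness and continuity verifications. The author's own proof evidently leans on citing \cite{Spi81,RW98} and \cite{AN06a} for exactly this representation.
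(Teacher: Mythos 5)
Your proof is correct and follows essentially the same route as the paper: both exhibit $h_+$ as a supremum, over a compact index set built from the compactified frequency axis and unit spheres of singular-vector variables, of branches that are $C^1$ in $\delta$ with jointly continuous derivative. The only (minor, and arguably cleaner) difference is in handling the modulus: the paper writes $\overline{\sigma}(G)=\sup_{\|u\|=\|v\|=1}|u^TGv|$ and then invokes convexity of $z\mapsto|z|$ to expand it as a further sup over an abstract compact set $\mathbb L$, whereas you absorb the phase into the complex unit vector and use $\overline{\sigma}(G)=\max_{\|u\|=\|v\|=1}\operatorname{Re}(u^HGv)$ directly, which makes each branch smooth without the extra layer; your explicit verification that $\mathbb D$ is open and that the $\delta$-derivatives are jointly continuous is left implicit in the paper.
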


\begin{proof}
Recall that the maximum singular value has the variational
representation
\[
\overline{\sigma}(G) = \sup_{\|u\|=1} \sup_{ \|v\|=1} \left| u^T G v\right|.
\]
Now observe that $z\mapsto |z|$, being convex,  is lower-$C^1$ as a mapping $\mathbb R^2 \to \mathbb R$, so we may write
it as
\[
|z| = \sup_{l\in \mathbb L} \Psi(z,l)
\]
for $\Psi$ jointly of class $C^1$ and $\mathbb L$ compact. Then
\begin{equation}
\label{4sup}
h_+(\delta) = \sup_{j\omega \in \mathbb S^1} \sup_{\|u\|=1} \sup_{ \|v\|=1} \sup_{l\in\mathbb L}
\Psi\left( u^TT_{zw}(\delta,j\omega)v, l  \right),
\end{equation}
where $\mathbb S^1 = \{j\omega: \omega\in \mathbb R \cup\{\infty\}\}$ is homeomorphic with the $1$-sphere.
This is  the desired representation (\ref{lower}), where the compact space $\mathbb K$
is obtained as $\mathbb K:=\mathbb S^1 \times \{u: \|u\|=1\}\times \{v: \|v\|=1\} \times\mathbb L$, $F$ as 
$F(\delta,j\omega,u,v,l):= \Psi\left( u^TT_{zw}(\delta,j\omega)v, l  \right)$ and $y$ as $y:= (j\omega,u,v,l)$.  
\end{proof}

\subsection{Case of the  spectral abscissa}
\label{sec:alpha}
For the spectral abscissa the situation is more complicated, as $a_\pm$ is 
not locally Lipschitz everywhere. Recall that an eigenvalue $\lambda_i$
of $A(\delta)$ is called active at $\delta$ if ${\rm Re}(\lambda_i) = \alpha\left( A(\delta)\right)$.
We use $I(\delta)$ to denote  the indices of active eigenvalues. Let us write the LFT describing
$A(\delta)$ as $ A(\delta) = \mathcal  A + \mathcal C \Delta (I-\mathcal D \Delta)^{-1} \mathcal B $, 
where dependence on controller parameters
$\kappa$ is again omitted 
and considered absorbed into the state-space data $\mathcal A$, $\mathcal B$, etc.

\begin{proposition}
\label{prop4}
Suppose all active eigenvalues $\lambda_i$, $i\in I(\delta)$ of $A(\delta)$ at $\delta$ are  semi-simple. Then $a_\pm(\delta)= \pm \alpha\left( A(\delta)\right)$
is Clarke subdifferentiable in a neighborhood of $\delta$. The Clarke
subdifferential of $a_-$ at $\delta$ is
$\partial a_-(\delta)= \{ \phi_Y : Y = (Y_i)_{i\in I(\delta)}, Y_i\succeq 0, \sum_{i\in I(\delta)} {\rm Trace}(Y_i)=1 \}$, where the $i$-th entry of $\phi_Y$ is  
$-{\rm Trace}{\,}{\Delta_i}^T \Phi_Y$ with $\Delta_i = \partial \Delta/\partial \delta_i$, and
\[
\Phi_Y = \sum_{i\in I(\delta)} {\rm Re} \left( (I-\mathcal D \Delta)^{-1} 
\mathcal C V_i Y_i U_i^H \mathcal B (I- \Delta \mathcal D)^{-1} \right)^T,
\]
where  $V_i$ is a column matrix of right eigenvectors, $U_i^H$ a row matrix of left eigenvectors
of $A(\delta)$ associated with the eigenvalue  $\lambda_i$,  and  such that $U_i^H V_i = I.$ 
\end{proposition}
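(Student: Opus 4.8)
The plan is to read $a_\pm$ as the composition of the smooth matrix map $\delta\mapsto A(\delta)=\mathcal A+\mathcal C\Delta(I-\mathcal D\Delta)^{-1}\mathcal B$ with the spectral abscissa $\alpha$, and then to combine a chain rule with the variational behaviour of $\alpha$ at a matrix whose active eigenvalues are semi-simple. First I would record the first-order perturbation theory of a semi-simple eigenvalue (Kato): if $\lambda_i$ is semi-simple of multiplicity $k_i$, with right eigenvector matrix $V_i$ and left eigenvector matrix $U_i^H$ normalised by $U_i^H V_i=I$, then under $A\mapsto A+E$ the $k_i$ eigenvalues emanating from $\lambda_i$ agree to first order with those of the compression $U_i^H E V_i$. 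Hence the active cluster contributes $\operatorname{Re}(\lambda_i)+t\,\alpha(U_i^H E V_i)+o(t)$ along $A+tE$, and taking the maximum over the active indices yields the one-sided directional derivative $\alpha'(A;E)=\max_{i\in I(\delta)}\alpha(U_i^H E V_i)$.

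Second, because $B\mapsto\alpha(B)$ is nonconvex it cannot equal the sublinear Clarke derivative, so passing to $\alpha^\circ$ convexifies the cluster term. Here I would invoke the variational analysis of the spectral abscissa (Burke--Lewis--Overton): at a matrix with semi-simple active eigenvalues the Clarke directional derivative is $\alpha^\circ(A;E)=\max_{i\in I(\delta)}\lambda_{\max}(\operatorname{Herm}(U_i^H E V_i))$, with $\operatorname{Herm}(Z)=\tfrac12(Z+Z^H)$. Dualising the maximal eigenvalue through $\lambda_{\max}(\operatorname{Herm}(B))=\max\{\operatorname{Re}\operatorname{Trace}(BY):Y\succeq0,\ \operatorname{Trace}(Y)=1\}$ and distributing the unit trace across the active indices turns this into the support function $\max_{Y}\sum_{i}\operatorname{Re}\operatorname{Trace}(U_i^H E V_i\,Y_i)$ over $Y=(Y_i)$, $Y_i\succeq0$, $\sum_i\operatorname{Trace}(Y_i)=1$. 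Reading off the subgradients identifies $\partial\alpha(A)$ with the set parametrised by such $Y$, which is exactly the $Y$-family appearing in the statement, and the normalisation together with positive semidefiniteness gives the asserted compactness and convexity.

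Third comes the chain rule, which transports this to the variable $\delta$. Differentiating the LFT and using the push-through identity $I+\Delta(I-\mathcal D\Delta)^{-1}\mathcal D=(I-\Delta\mathcal D)^{-1}$ produces $\partial A/\partial\delta_j=\mathcal C(I-\Delta\mathcal D)^{-1}\Delta_j(I-\mathcal D\Delta)^{-1}\mathcal B$ with $\Delta_j=\partial\Delta/\partial\delta_j$. Substituting this $E=\partial A/\partial\delta_j$ into the cluster term $\sum_i\operatorname{Re}\operatorname{Trace}(U_i^H E V_i Y_i)$ and cyclically moving $\Delta_j$ to the front reproduces, up to the transpose and conjugation conventions fixed in the statement, the contraction $\operatorname{Trace}(\Delta_j^T\Phi_Y)$ with $\Phi_Y$ as printed; the elementary identity $\operatorname{Trace}(\Delta_j^T X^T)=\operatorname{Re}\operatorname{Trace}(X\Delta_j)$ for real $\Delta_j$ absorbs the real part and the outer transpose. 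Finally, since $a_-=-a_+$, the rule $\partial(-f)=-\partial f$ from \cite{Cla83} transfers the computation to $a_-$ and accounts for the sign in the entries $-\operatorname{Trace}(\Delta_i^T\Phi_Y)$.

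The genuinely delicate point, and the one I expect to be the main obstacle, is the local Lipschitz property that underwrites the phrase ``Clarke subdifferentiable in a neighborhood''. Semi-simplicity is assumed only at $\delta$, whereas the Clarke subdifferential presupposes local Lipschitzness on a whole neighbourhood, and a semi-simple cluster may split into a defective one under arbitrarily small perturbation; so one must show that the relevant Lipschitz and regularity features persist locally. I would control this through the Riesz spectral projector of the active cluster, which is analytic in $\delta$ as long as the cluster stays isolated from the rest of the spectrum, reducing $\alpha$ near $\delta$ to the spectral abscissa of the smoothly varying compression $U^H A(\delta)V$ and bounding its oscillation by the estimate $\alpha(\cdot)\le\|\cdot\|$. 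Making this rigorous, rather than the formal subdifferential calculation, is the crux; the only remaining labour is the bookkeeping of transposes, conjugates and the left/right eigenvector convention needed to land precisely on $\Phi_Y$.
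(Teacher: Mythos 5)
The paper's own proof of this proposition is a bare citation of \cite{BO94} (with \cite{BAN07}, and \cite{Lui11} for the Lipschitz property), so your proposal is not a different route but a detailed reconstruction of exactly the machinery those references supply: Kato perturbation of a semi-simple cluster through the compression $U_i^H E V_i$, the Burke--Overton formula $\alpha^\circ(A;E)=\max_{i}\lambda_{\max}\bigl(\tfrac12(U_i^HEV_i+(U_i^HEV_i)^H)\bigr)$, its dualization against $\{Y\succeq 0,\ \operatorname{Trace}Y=1\}$ distributed over the active clusters, the LFT chain rule $\partial A/\partial\delta_j=\mathcal C(I-\Delta\mathcal D)^{-1}\Delta_j(I-\mathcal D\Delta)^{-1}\mathcal B$ obtained from the push-through identity, and $\partial(-f)=-\partial f$. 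That calculus is sound; your cyclic-trace bookkeeping is the right way to arrive at $\operatorname{Trace}(\Delta_j^T\Phi_Y)$ (it places the factor adjacent to $V_i$ on the state-input side of the LFT, which is how the printed $\Phi_Y$ must be read, since otherwise $\mathcal CV_i$ does not compose with the printed form of $A(\delta)$).

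The genuine gap is the step you yourself flag as the crux, and your proposed repair does not close it. The Riesz-projector reduction to the smooth compression $B_i(\delta')$ together with $\alpha(\cdot)\le\|\cdot\|$ yields only calmness at the base point, $|\alpha(B_i(\delta'))-\operatorname{Re}\lambda_i|\le C\|\delta'-\delta\|$; local Lipschitzness is a two-point estimate on a neighborhood, and $\alpha$ is not Lipschitz on any neighborhood of $\lambda_iI$ once the cluster size is $\ge 2$. Concretely, $A(\delta)=\mathcal C\Delta\mathcal B$ with $\Delta=\operatorname{diag}(\delta_1,\delta_2)$, $\mathcal C=I_2$ and $\mathcal B=\left[\begin{smallmatrix}0&1\\1&0\end{smallmatrix}\right]$ gives $a_+(\delta)=\sqrt{\max(\delta_1\delta_2,0)}$, whose active eigenvalue at $\delta=0$ is semi-simple, yet $|a_+(1/n,1/n^2)-a_+(1/n,0)|=n^{-3/2}$ while the arguments are only $n^{-2}$ apart, so $a_\pm$ is not locally Lipschitz there. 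A semi-simple eigenvalue of multiplicity $\ge 2$ is derogatory, and the paper itself concedes immediately after the proposition that $a_\pm$ may then fail to be locally Lipschitz; semi-simplicity alone therefore cannot deliver the ``Clarke subdifferentiable in a neighborhood'' clause by your argument or any other. To complete the proof you must either restrict to simple active eigenvalues (where $a_+$ is locally a finite maximum of $C^1$ branches and everything you wrote goes through verbatim), or import the precise hypotheses and the limiting/regular-subdifferential sense of the statement from \cite{BO94} and \cite{Lui11}, which is exactly what the paper does by citation.
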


\begin{proof}
This  follows from \cite{BO94}. See also \cite{BAN07}. A very concise proof that semi-simple
eigenvalue functions are locally Lipschitz could also be found in \cite{Lui11}. 
\end{proof}

When every active eigenvalue is simple,  $Y_i$ reduces to a scalar $y_i$ and a 
fast implementation is possible. 
We
use the LU-decomposition to solve for $\widetilde u_i$ and  $\widetilde v_i$ in the linear systems
\[
\widetilde u_i^H (I- \Delta \mathcal D): = u_i^H \mathcal B,\;  (I-\mathcal D \Delta) \widetilde v_i := 
\mathcal C v_i\,.
\]
Given the particular structure (\ref{matrix}) of $\Delta$, subgradients with respect to the $k$th entry are readily obtained as a sum 
over $i\in I(\delta)$ of inner products
of the form $ y_i {\rm Re} \,\widetilde{u}_i(J(k))^H  \widetilde{v}_i(J(k))$, 
where  $J(k)$ is a selection of indices associated to the rows/columns of $\delta_k$ in $\Delta(\delta)$. 
Similar inner product forms apply to the computation of $H_\infty$ norm subgradients. 

It was observed in \cite{BO94} that $a_\pm$ may fail to be locally Lipschitz at $\delta$
if $A(\delta)$ has 
a derogatory active eigenvalue. 

\begin{proposition}
\label{prop5}
Suppose every active eigenvalue of $A(\delta)$ is simple. Then $a_-$ is upper-$C^1$
in a neighborhood of $\delta$. 
\end{proposition}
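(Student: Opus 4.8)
The plan is to reduce the statement to showing that the spectral abscissa $a_+ = \alpha(A(\cdot))$ is lower-$C^1$ in a neighborhood of the given point, since by definition $a_-$ is upper-$C^1$ exactly when $-a_- = a_+$ is lower-$C^1$. I would then exhibit $a_+$ locally as a finite maximum of $C^1$ functions, which is the canonical way to certify the lower-$C^1$ property: one takes $\mathbb K$ to be a finite, hence compact, index set, and the joint continuity of $F$ and $\partial F / \partial \delta$ required in \eqref{lower} is automatic on a discrete $\mathbb K$.

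First I would fix the base point, call it $\delta_0$, and invoke perturbation theory for simple eigenvalues. Since $A(\delta) = \mathcal A + \mathcal C \Delta (I - \mathcal D \Delta)^{-1}\mathcal B$ depends smoothly on $\delta$ on the well-posedness domain, and each active eigenvalue $\lambda_i$, $i \in I(\delta_0)$, is simple by hypothesis, the implicit function theorem applied to the characteristic equation $\det(A(\delta) - \lambda I) = 0$ — where simplicity guarantees that the $\lambda$-derivative of the characteristic polynomial is nonzero — yields a neighborhood $U$ of $\delta_0$ and $C^1$ branches $\delta \mapsto \lambda_i(\delta)$ tracking each active eigenvalue and remaining simple on $U$. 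Consequently each real part $\delta \mapsto \mathrm{Re}(\lambda_i(\delta))$ is a $C^1$ real-valued function on $U$. Note that if $A(\delta_0)$ is real and an active $\lambda_i$ has nonzero imaginary part, its conjugate is also active and simple, so both are captured among the branches, causing no difficulty.

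Next I would argue that on a possibly smaller neighborhood the spectral abscissa is governed exactly by these active branches. Because the inactive eigenvalues satisfy $\mathrm{Re}(\lambda_j(\delta_0)) < \alpha(A(\delta_0))$ with a strict margin, and because the spectrum depends continuously on $\delta$, there is a neighborhood on which every inactive eigenvalue has real part strictly below $\alpha(A(\delta))$, while the active branches stay close to $\alpha(A(\delta_0))$. Hence on this neighborhood $\alpha(A(\delta)) = \max_{i \in I(\delta_0)} \mathrm{Re}(\lambda_i(\delta))$, a finite maximum of $C^1$ functions. Taking $\mathbb K = I(\delta_0)$ and $F(\delta, i) = \mathrm{Re}(\lambda_i(\delta))$ in the definition of lower-$C^1$ then establishes that $a_+$ is lower-$C^1$, so that $a_-$ is upper-$C^1$, as claimed.

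The main obstacle is controlling the active set across the neighborhood: one must rule out that an eigenvalue inactive at $\delta_0$ becomes active nearby, and must ensure the active branches neither collide nor lose simplicity. Both are handled by the continuity/strict-margin argument together with the openness of the simplicity condition, but this is precisely the step that genuinely consumes the simplicity hypothesis of Proposition \ref{prop4}. For a derogatory or defective active eigenvalue the branches could coalesce and develop Puiseux-type, non-Lipschitz behavior — exactly the failure recorded by \cite{BO94} — so the finite-max representation by $C^1$ functions would break down.
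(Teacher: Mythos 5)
Your proposal is correct and follows essentially the same route as the paper, which simply observes that under the simplicity hypothesis $a_+$ is locally a finite maximum of $C^1$ eigenvalue-real-part branches and then concludes via $a_-=-a_+$. Your write-up merely fills in the details the paper leaves implicit (the implicit-function-theorem tracking of simple eigenvalues and the strict-margin control of the active set), so there is nothing methodologically different to report.
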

\begin{proof}
If active eigenvalues are simple, then $a_+$ is the maximum of $C^1$ functions in a neighborhood of $\delta$. The result follows from  $a_- = -a_+$. 
\end{proof}

\section{Algorithm for min-min programs}
\label{sec:algorithm}
In this section we present our descent algorithm to solve programs
(\ref{minus_alpha}) and (\ref{minus_h}). We consider an abstract form of the min-min program 
with $f$ a general objective function of this type:
\begin{equation}
\label{min-min}
\begin{array}{ll}
\text{minimize} & f(\delta)\\
\text{subject to}&\delta\in {\bf \Delta}
\end{array}
\end{equation}
where as before ${\bf\Delta}$ is a compact convex set with a convenient structure.
As we already pointed out, the crucial point is that
we want to stay as close as possible to a standard algorithm for smooth optimization, while 
assuring convergence under the specific form of upper nonsmoothness in these programs. 

\begin{algorithm}[!ht]
\caption{Descent method for min-min programs.}\label{algo-outer}
\begin{algo}
\PARAMETERS  $0 < \gamma  < \Gamma < 1$, $0 < \theta < \Theta < 1$.  
\STEP{Initialize} 
Put outer loop counter $j=1$, choose initial guess $\delta^1\in {\bf \Delta}$, 
and fix memory step size $t_1^\sharp > 0$. 
\STEP[$\diamond$]{Stopping}\label{stopping}
If {$\delta^j$ is a KKT point of (\ref{min-min})}
		then {exit}, otherwise
	{go to} inner loop.
\STEP{Inner loop} 
At current iterate $\delta^j$ call the step finding
subroutine (Subroutine \ref{algo-prox}) 
started with last memorized stepsize $t_j^\sharp$
 to find a step $t_k>0$ and a new serious iterate $\delta^{j+1}$  such that 
\[
\rho_k =\frac{f(\delta^j)-f(\delta^{j+1})}{f(\delta^j)-\phi_k^\sharp(\delta^{j+1},\delta^j)} \geq \gamma.
\]
\vspace*{-\baselineskip}
\STEP[$\diamond$]{Stepsize update}
If {$\rho_k\geq \Gamma$} 
		then
		update memory stepsize as $t_{j+1}^\sharp =\theta^{-1} t_k$,
	otherwise
	update memory stepsize as $t_{j+1}^\sharp = t_k$.
	Increase counter $j$ and {go back to} step \ref{stopping}.
\end{algo}
\end{algorithm}

In order to understand
Algorithm \ref{algo-outer} and its step finding subroutine (Subroutine \ref{algo-prox}),  we
recall  from
\cite{Noll12,NPR08} that
\[
\phi^\sharp(\eta,\delta) = f(\delta) + f^\circ(\delta,\eta-\delta)
\]
the standard  model of $f$ at $\delta$, where $f^\circ(\delta,d)$ is the Clarke
directional derivative of $f$ at $\delta$ in direction $d$ \cite{Cla83}.  This model can be thought of as  a substitute for
a first-order Taylor expansion at $\delta$ and can also be represented as
\begin{equation}\label{eq-ClarkeModel}
\phi^\sharp(\eta,\delta) = f(\delta) + \max_{g\in \partial f(\delta)} g^T (\eta-\delta),
\end{equation}
where $\partial f(\delta)$ is the Clarke subdifferential of $f$ at $\delta$.
In the subroutine we
generate lower  approximations $\phi_k^\sharp$ of $\phi^\sharp$
using finite subsets $\mathcal G_k \subset \partial f(\delta)$,
putting
\[
\phi_k^\sharp(\eta,\delta) = f(\delta) + \max_{g\in \mathcal G_k} g^T (\eta-\delta).
\]
We call
$\phi_k^\sharp$ the  working model at inner loop  counter $k$.

\begin{algorithm}[!ht]
\setcounter{algorithm}{0}
\floatname{algorithm}{Subroutine}
\caption{Descent step finding for min-min programs.}\label{algo-prox}
\begin{algo}
\INPUT Current serious iterate $\delta$, last memorized stepsize $t^\sharp>0$. Flag.
\OUTPUT Next serious iterate $\delta^+$.
\STEP{Initialize} 
Put linesearch counter $k=1$, and initialize search at $t_1 =t^\sharp$. 
Choose subgradient $g_0\in \partial f(\delta)$.
Put $\mathcal G_1=\{g_0\}$. 
\STEP{Tangent program}\label{tangent_prog} 
Given $t_k > 0$, a finite set of Clarke subgradients 
$\mathcal G_k\subset \partial f(\delta)$,
and the corresponding  working
model $\phi_k^\sharp(\cdot,\delta) = f(\delta) +\displaystyle \max_{g\in \mathcal G_k} g^T (\cdot-\delta)$,
compute  solution $\eta^k\in {\bf \Delta}$ of the convex quadratic tangent program
\[
{\rm (TP)}\qquad\qquad\qquad\qquad
\min_{\eta\in {\bf \Delta}}
\phi^\sharp_k(\eta,\delta)+\textstyle \frac{1}{2t_k} \|\eta-\delta\|^2. \qquad\qquad\qquad\qquad
\]
\vspace*{-\baselineskip}
\STEP[$\diamond$]{Armijo test} 
Compute
\[
\rho_k = \frac{f(\delta)-f(\eta^k)}{f(\delta)-\phi^\sharp_k(\eta^k,\delta)}
\]
	If {$\rho_k \geq \gamma$} 
		then $\delta^+ = \eta^k$ successfully to Algorithm \ref{algo-outer}. 
	Otherwise
	{go to} step \ref{model_update}
\STEP{If {\tt Flag} $= $ {\tt strict}. Cutting and aggregate plane}\label{model_update} 
Pick a subgradient $g_{k}\in \partial f(\delta)$
such that $f(\delta) + g_k^T (\eta^k-\delta) = \phi^\sharp(\eta^k,\delta)$, or
equivalently, 
$f^\circ(\delta,\eta^k-\delta) = g_{k}^T (\eta^k-\delta)$. Include $g_k$ into
the new set $\mathcal G_{k+1}$ for the next sweep.
Add the aggregate subgradient $g_k^*$ 
into the set $\mathcal G_{k+1}$ to limit its size.
\STEP[$\diamond$]{Step management}\label{manage} 
Compute the test quotient
\[
\widetilde{\rho}_k = \frac{f(\delta) - \phi_{k+1}^\sharp(\eta^k,\delta)}{f(\delta)-\phi_k^\sharp(\eta^k,\delta)}.
\]
	If {$\widetilde{\rho}_k \geq \widetilde{\gamma}$}
		then select $t_{k+1} \in  [\theta t_k,\Theta t_k]$, else
	 keep $t_{k+1} = t_k$. 
	Increase counter $k$ and {go back to} step \ref{tangent_prog}.
\end{algo}
\end{algorithm}

\begin{remark}
Typical values are $\gamma =  0.0001$, $\widetilde{\gamma} = 0.0002$,  and $\Gamma = 0.1$.
For backtracking we use $\theta =\frac{1}{4}$ and $\Theta = \frac{3}{4}$.
\end{remark}

\subsection{Practical aspects of Algorithm \ref{algo-outer}}
The subroutine  
of the descent Algorithm \ref{algo-outer} 
looks complicated, but as we now argue, it reduces to a
standard backtracking linesearch in the majority of cases. To begin with,  if $f$ is certified
upper-$C^1$, then
we completely dispense with step 4 and keep $\mathcal G_k=\{g_0\}$, which by force
reduces the subroutine to a  linesearch
along a projected gradient direction. This is what we indicate by 
{\tt flag} = {\tt upper} in step 4
of the subroutine.

If $f$ is only known to have a strict standard model $\phi^\sharp$ in (\ref{eq-ClarkeModel}), 
without being certified upper-$C^1$, which corresponds to {\tt flag} = {\tt strict},  then
step 4 of the subroutine is needed, as we shall see in section \ref{conv:alpha}. 
However, even then we expect the subroutine to reduce to a standard linesearch.
This is clearly the case when the Clarke subdifferential $\partial  f(\delta)$ at the current iterate $\delta$ is singleton,
because 
$\phi_k^\sharp(\eta,\delta) = f(\delta) + \nabla f(x)^T(\eta-\delta)$ is then again  independent of $k$, 
so $\rho_k \geq \gamma$ reads
\[
f(\eta^k) \leq f(\delta^j) + \gamma \nabla f(\delta^j)^T (\eta^k-\delta^j),
\]
which is the usual Armijo test \cite{Ber82}.  Moreover,
$\eta^k$ is then  a step along the projected gradient $P_{\bf \Delta-\delta}(-\nabla f(\delta))$, which is easy to compute
due to the simple structure of ${\bf \Delta}$. More precisely, for ${\bf \Delta}=[-1,1]^m$
and stepsize $t_k>0$, the solution $\eta$ of tangent program $(TP)$ in step 2 can be computed coordinatewise
as
\[
{\min} \left\{ \gamma_{i} \eta +\textstyle {(2t_k)^{-1}}  \eta^2 + \left(\gamma_{i} - {\delta_i}t_k^{-1}\right)\eta: -1 \leq \eta \leq 1\right\},
\]
where $\gamma_i:=\partial f(\delta)/\partial \delta_i$.
Cutting plane and aggregate plane in step 4 become redundant, and the quotient
$\widetilde{\rho}_k$ in step 5 is also redundant as it is always equal to  $1$. 

\begin{remark}
Step 4 is only fully executed if $f$ is {\em not} certified upper-$C^1$ and 
the subgradient $g_0\in \partial f(\delta)$ in step 1 of Subroutine \ref{algo-prox} does {\em not}
satisfy $f(\delta) + g_0^T (\eta^k-\delta) = \phi^\sharp(\eta^k,\delta)$. In that event step 4 requires computation
of a new subgradient
$g_k\in \partial f(\delta)$ which {\em does} satisfy  $f(\delta) + g_k^T(\eta^k-\delta)=\phi^\sharp(\delta,\eta^k-\delta)$. 
From here on the procedure changes.
The sets $\mathcal G_{k+1}$ may now grow, because we will 
add $g_k$ into $\mathcal G_{k+1}$. This corresponds to what happens in a bundle method.
The tangent program $(TP)$ has now to be solved numerically using a QP-solver, but
since we may limit
the number of elements of $\mathcal G_{k+1}$ using the idea of the aggregate subgradient 
of Kiwiel \cite{Kiw83}, see also \cite{Dao}, this is still very fast.   
\end{remark}

\begin{remark}
For the spectral abscissa $f(\delta) = a_-(\delta)$, which is not certified upper-$C^1$,
we use this cautious variant, where the computation of  $g_k$ in step 4 may be required. 
For $f=a_-$ this leads to a low-dimensional semidefinite program.
\end{remark}

\begin{remark}
The stopping test in step 2 of Algorithm \ref{algo-outer} can be delegated to
Subroutine  \ref{algo-prox}.
Namely, if $\delta^j$ is a Karush-Kuhn-Tucker point
of (\ref{min-min}), then $\eta^k = \delta^j$ is solution of the tangent program $(TP)$. This means we can
use the following practical stopping tests:
If the inner loop at iterate $\delta^j$ finds $\delta^{j+1}\in {\bf \Delta}$
such that
\[
\frac{\|\delta^{j+1}-\delta^j\|}{1 + \|\delta^j\|} < {\rm tol}_1,
\quad
\frac{|f(\delta^{j+1})-f(\delta^j)|}{1+|f(\delta^j)|} < {\rm tol}_2,
\]
then we decide that $\delta^{j+1}$ is optimal and stop. That is, the $(j+1)$st inner loop is not started.
On the other hand, if the inner loop at $\delta^j$ has difficulties finding a new iterate 
and provides five consecutive unsuccessful backtracks $\eta^k$ such that
\[
\frac{\|\eta^{k}-\delta^j\|}{1 + \|\delta^j\|} < {\rm tol}_1,
\quad
\frac{|f(\eta^{k})-f(\delta^j)|}{1+|f(\delta^j)|} < {\rm tol}_2,
\]
or if a maximum $k_{\rm max}$ of linesearch steps $k$ is exceeded,
then we decide that 
$\delta^j$ was already optimal and stop. In our experiments we use
${\rm tol}_1 = 10^{-4}$, ${\rm tol}_2 = 10^{-4}$, $k_{\rm max}=50$.
\end{remark}

\begin{remark}
The term {\em stepsize} used for the parameter $t$ in the tangent program $(TP)$ in step 2
of Algorithm \ref{algo-prox} is understood verbatim when $\mathcal G_k$ consists of a single element $g_0$
and the minimum in $(TP)$ is unconstrained, because then $\|\eta^k-\delta\| = t_k \|g_0\|$. However,
even in those cases where step 4 of the subroutine is carried out in its full version, 
$t_k$ still acts like a stepsize in the sense that decreasing $t_k$ gives smaller steps (in the inner loop), while  increasing  $t^\sharp$
allows larger steps (in the next inner loop).
\end{remark}

\subsection{Convergence analysis for the negative $H_\infty$-norm}
\label{conv:hinf}
Algorithm \ref{algo-outer} was studied in much detail in \cite{Noll12}, and we review the convergence result here,
applying them directly to the functions $a_-$ and $h_-$.
The significance of the class of upper-$C^1$ functions for convergence lies in the following

\begin{proposition}
Suppose $f$ is upper-$C^1$ at $\bar{\delta}$. Then its standard model $\phi^\sharp$ is strict at $\bar{\delta}$
in the following sense:  For every $\varepsilon > 0$ there exists $r > 0$ such that
\begin{equation}
\label{strict}
f(\eta) \leq \phi^\sharp(\eta,\delta) + \varepsilon \|\eta-\delta\|
\end{equation}
is satisfied for all $\delta,\eta\in B(\bar{\delta},r)$.
\end{proposition}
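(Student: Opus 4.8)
The plan is to exploit the defining min-representation of an upper-$C^1$ function and to compare $f(\eta)$ with the single smooth branch that is active at $\delta$. Since $f$ is upper-$C^1$ at $\bar\delta$, the function $-f$ is lower-$C^1$, so on a neighborhood $U$ of $\bar\delta$ we may write $f(x) = \min_{y\in\mathbb K} G(x,y)$ with $G := -F$, where $\mathbb K$ is compact and both $G$ and $\partial G/\partial x$ are jointly continuous. I would fix $r_0 > 0$ with $\overline{B}(\bar\delta, r_0) \subset U$; on the compact product $\overline{B}(\bar\delta, r_0)\times \mathbb K$ the map $\nabla_x G$ is then uniformly continuous, which will supply the uniform remainder control.

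First I would, for each $\delta$, select a minimizer $y_\delta\in\mathbb K$ with $f(\delta) = G(\delta, y_\delta)$, which exists by compactness of $\mathbb K$. Since $f(\eta) = \min_{y} G(\eta, y) \le G(\eta, y_\delta)$ for every $\eta$, I obtain the one-sided estimate $f(\eta) \le G(\eta, y_\delta)$ comparing $f$ with a single smooth branch. Expanding this branch along the segment $[\delta,\eta]$ by the integral mean value formula gives
\[
f(\eta) \le f(\delta) + \nabla_x G(\delta, y_\delta)^T(\eta-\delta) + \int_0^1 \big[\nabla_x G(\delta + t(\eta-\delta), y_\delta) - \nabla_x G(\delta, y_\delta)\big]^T (\eta-\delta)\, dt.
\]
For $\delta, \eta \in B(\bar\delta, r)$ with $r \le r_0/2$, the whole segment stays in $\overline{B}(\bar\delta, r_0)$ and $\|\eta-\delta\| < 2r$, so uniform continuity of $\nabla_x G$ bounds the integral term by $\varepsilon\|\eta-\delta\|$ once $r$ is chosen small enough.

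Next I would identify the linear term with the standard model. Because $-f = \max_y(-G(\cdot,y))$ is a lower-$C^1$, hence Clarke regular, max of smooth functions, the Clarke subdifferential admits the Danskin-type representation $\partial f(\delta) = \mathrm{conv}\{\nabla_x G(\delta, y): y \in Y(\delta)\}$, where $Y(\delta)$ is the set of minimizers of $G(\delta, \cdot)$; this is the fact I would import from Clarke \cite{Cla83} and Spingarn \cite{Spi81}. In particular $\nabla_x G(\delta, y_\delta) \in \partial f(\delta)$, whence $\nabla_x G(\delta, y_\delta)^T(\eta-\delta) \le \max_{g\in\partial f(\delta)} g^T(\eta-\delta) = f^\circ(\delta, \eta-\delta)$. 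Substituting into the displayed estimate yields $f(\eta) \le f(\delta) + f^\circ(\delta, \eta-\delta) + \varepsilon\|\eta-\delta\| = \phi^\sharp(\eta,\delta) + \varepsilon\|\eta-\delta\|$, which is exactly (\ref{strict}).

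I expect the only delicate point to be the subgradient inclusion $\nabla_x G(\delta, y_\delta) \in \partial f(\delta)$: it rests on the fact that lower-$C^1$ functions are Clarke regular and that their subdifferential is the convex hull of the active branch gradients, for which the joint continuity of $\partial G/\partial x$ and the compactness of $\mathbb K$ are used in an essential way. Everything else is a uniform first-order Taylor estimate made possible by uniform continuity of $\nabla_x G$ on the compact product $\overline{B}(\bar\delta, r_0)\times\mathbb K$; the uniformity in both $\delta$ and $\eta$ comes for free because the modulus of continuity of $\nabla_x G$ does not depend on the chosen branch $y_\delta$.
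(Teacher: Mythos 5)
Your proof is correct, but it is genuinely different from what the paper does: the paper gives no direct argument at all, instead invoking a stronger estimate proved in the external reference [Dao] (see also [Noll10, Noll12]), namely that for \emph{arbitrary} subgradients $g_k\in\partial f(\delta^k)$ one has $f(\eta^k)\leq f(\delta^k)+g_k^T(\eta^k-\delta^k)+\varepsilon_k\|\eta^k-\delta^k\|$ with $\varepsilon_k\to 0$, from which (\ref{strict}) follows a fortiori. Your argument is a self-contained first-principles proof from the min-representation $f=\min_{y\in\mathbb K}G(\cdot,y)$: comparing $f(\eta)$ with the single active branch $G(\cdot,y_\delta)$, using uniform continuity of $\nabla_x G$ on $\overline{B}(\bar\delta,r_0)\times\mathbb K$ for the remainder, and the inclusion $\nabla_x G(\delta,y_\delta)\in\partial f(\delta)$ to dominate the linear term by $f^\circ(\delta,\eta-\delta)$. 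Two remarks. First, the step you flag as delicate is actually easier than you suggest: you do not need Clarke regularity or the full Danskin equality $\partial f(\delta)=\mathrm{conv}\{\nabla_xG(\delta,y):y\in Y(\delta)\}$, only the elementary inclusion of active gradients, which follows from $(-f)^\circ(\delta,d)\geq\limsup_{t\downarrow 0}\bigl(F(\delta+td,y_\delta)-F(\delta,y_\delta)\bigr)/t=\nabla_xF(\delta,y_\delta)^Td$ together with $\partial(-f)=-\partial f$. Second, your single-branch comparison proves exactly (\ref{strict}) but not the stronger property (\ref{stronger}) that the paper imports and later uses in the convergence proof of Theorem \ref{theorem1}; to get that you would additionally need the reverse (Danskin) inclusion and a convex-combination argument over all active branches. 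What your route buys is transparency and independence from the external reference; what the paper's route buys is the sharper estimate actually required downstream.
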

\begin{proof}
The following, even stronger property of upper-$C^1$ functions was proved in \cite{Dao}, see
also \cite{Noll10,Noll12}.
Suppose $\delta^k\to \bar{\delta}$ and $\eta^k\to \bar{\delta}$, and let $g_k\in \partial f(\delta_k)$ arbitrary.  Then there exist
$\varepsilon_k\to 0$ such that 
\begin{equation}
\label{stronger}
f(\eta^k) \leq f(\delta^k) + g_k^T(\eta^k-\delta^k) + \varepsilon_k \|\eta^k-\delta^k\|
\end{equation}
is satisfied.
\end{proof}
Remark \ref{note} below
shows that upper-$C^1$, and thus (\ref{stronger}), are stronger than strictness (\ref{strict}) of the standard model. 

\begin{theorem}[Worst-case $H_\infty$ norm on $\bf \Delta$]
\label{theorem1}
Let $\delta^j \in {\bf \Delta}$ be the sequence generated by Algorithm
{\rm \ref{algo-outer}} with standard linesearch for minimizing  program {\rm (\ref{minus_h})}. 
Then the sequence $\delta^j$ converges to a 
Karush-Kuhn-Tucker point $\delta^*$ of {\rm (\ref{minus_h})}.
\end{theorem}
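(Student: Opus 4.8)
The plan is to recognize Theorem \ref{theorem1} as an instance of the abstract convergence theory for Algorithm \ref{algo-outer} developed in \cite{Noll12}, and to reduce the proof to verifying that $f = h_-$ satisfies the hypotheses of that theory. The decisive structural property needed is that the standard model $\phi^\sharp$ of $f$ be \emph{strict} in the sense of (\ref{strict}). I would obtain this in two steps. First, Proposition \ref{prop1} guarantees that $h_-$ is everywhere Clarke subdifferentiable on $\mathbb D$, hence locally Lipschitz there. Second, the proposition establishing that $h_-$ is upper-$C^1$ on $\mathbb D$, combined with the proposition showing that upper-$C^1$ implies strictness of the standard model, yields exactly (\ref{strict}) at every point of $\mathbb D$. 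Since the program minimizes over the compact convex set ${\bf \Delta} \subseteq \mathbb D$ on which $h_-$ is continuous, $f$ is also bounded below, so the optimization is well-posed.

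Given strictness, I would next argue that the inner loop terminates finitely, so that the algorithm is well defined and produces an infinite sequence of serious iterates. Because $h_-$ is certified upper-$C^1$, the \texttt{flag} $=$ \texttt{upper} branch applies and we keep $\mathcal G_k = \{g_0\}$ a singleton, so that Subroutine \ref{algo-prox} collapses to a projected-gradient linesearch with Armijo acceptance test $\rho_k \ge \gamma$. The point is that for the solution $\eta^k$ of the tangent program $(TP)$ the predicted decrease $f(\delta) - \phi_k^\sharp(\eta^k,\delta)$ behaves like a positive multiple of $\|\eta^k - \delta\|^2 / t_k$ whenever $\delta$ is not critical, whereas strictness (\ref{strict}) bounds the model error by $\varepsilon \|\eta^k - \delta\|$. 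Shrinking $t_k$ drives $\|\eta^k - \delta\| \to 0$, so the error term becomes negligible against the predicted decrease, forcing $\rho_k \ge \gamma$ after finitely many backtracks, and at the same time showing that a nonzero step is accepted at any non-critical $\delta$.

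With finite termination in hand, I would establish sufficient decrease and criticality of accumulation points. At each accepted step the serious test gives $f(\delta^j) - f(\delta^{j+1}) \ge \gamma\,(f(\delta^j) - \phi_k^\sharp(\delta^{j+1},\delta^j)) \ge 0$, so $(f(\delta^j))$ is nonincreasing; since $f$ is bounded below on ${\bf \Delta}$, it converges, and the predicted decreases tend to zero. A standard argument then shows that the associated criticality measure vanishes in the limit, so that every accumulation point $\delta^*$ of $(\delta^j)$, which exists by compactness of ${\bf \Delta}$, satisfies the KKT condition $0 \in \partial h_-(\delta^*) + N_{\bf \Delta}(\delta^*)$; equivalently, $\eta = \delta^*$ solves $(TP)$ at $\delta^*$.

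The last and, I expect, hardest step is to upgrade this subsequential statement to convergence of the \emph{entire} sequence $\delta^j$ to a single KKT point. Criticality of accumulation points alone does not yield convergence of the whole sequence, and this is precisely where the detailed machinery of \cite{Noll12} is required: one controls the memory stepsizes $t_j^\sharp$ and exploits strictness to rule out oscillation between distinct accumulation points, typically via a Cauchy-type estimate on $\sum_j \|\delta^{j+1} - \delta^j\|$ together with connectedness of the accumulation set. I would invoke the corresponding result of \cite{Noll12} to conclude that $\delta^j \to \delta^*$ for a KKT point $\delta^*$ of (\ref{minus_h}).
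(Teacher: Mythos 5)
Your first three paragraphs follow essentially the same route as the paper: you reduce the theorem to the abstract convergence theory of \cite{NPR08,Noll12}, obtain strictness (\ref{strict}) of the standard model from the upper-$C^1$ property of $h_-$ (which is exactly how the paper justifies collapsing Subroutine \ref{algo-prox} to a plain linesearch), and conclude that every accumulation point is a KKT point, with compactness of ${\bf \Delta}$ replacing the boundedness hypotheses of the cited references. Up to that point the proposal is sound and matches the paper's argument.

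The gap is in your last step, which you yourself flag as the hardest. Convergence of the \emph{whole} sequence to a \emph{single} KKT point does not follow from stepsize control, strictness, and connectedness of the accumulation set: those ingredients are compatible with the iterates circulating along a continuum of critical points, and no Cauchy-type estimate on $\sum_j \|\delta^{j+1}-\delta^j\|$ can be extracted from them alone. The result you want to invoke, \cite[Cor.\ 1]{Noll12}, has an additional hypothesis that you neither state nor verify: the objective must satisfy the \L ojasiewicz inequality at its accumulation points. This is the decisive analytic input, and the paper supplies it explicitly: since $G$ depends analytically on $\delta$, the representation (\ref{4sup}) exhibits $h_+$ (hence $h_-$, after negation) as a semi-infinite extremum of a subanalytic function $F(\delta,y)$ over a compact subanalytic index set $\mathbb K$, and such extrema are again subanalytic \cite{BM88}; subanalytic functions satisfy the \L ojasiewicz inequality \cite{BDL06}. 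Without this verification your appeal to \cite{Noll12} is circular --- you would be assuming the conclusion of the single-limit-point theorem rather than checking its hypothesis. To repair the proof, insert the subanalyticity argument before invoking \cite[Cor.\ 1]{Noll12}.
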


\begin{proof}
The proof of \cite[Theorem 2]{NPR08}  shows that every accumulation point of the sequence
$\delta^j$ is a critical point of (\ref{minus_h}), provided $\phi^\sharp$ is strict. Moreover, since the 
iterates are feasible, we obtain a KKT point. See Clarke \cite[p. 52]{Cla83} for a definition. 
However, it was observed in \cite{Dao} that 
estimate (26) in that proof can be replaced
by  (\ref{stronger})  when the objective is upper-$C^1$. Since this is the case
for $h_-$ on its domain $\mathbb D$, 
the step finding Subroutine \ref{algo-prox}  can be
reduced to a linesearch.
Reference \cite{Dao} gives also details on how to deal with the 
constraint set ${\bf \Delta}$. Note that hypotheses assuring boundedness of the sequence $\delta^j$ in 
\cite{NPR08,Noll12,Dao} are not needed, since ${\bf \Delta}$ is bounded.

Convergence to a single KKT point is now assured through  \cite[Cor. 1]{Noll12}, 
because  $G$ depends analytically on $\delta$, 
so that $h_-$ is  a subanalytic function, and 
satisfies therefore the \L ojasiewicz inequality \cite{BDL06}. 
Subanalyticity of $h_-$ can be derived from the following fact \cite{BM88}.
If $F:\mathbb R^n \times \mathbb K \to \mathbb R$ is subanalytic, and $\mathbb K$ is subanalytic
and compact, then $f(\delta)= \min_{y\in \mathbb K} F(\delta,y)$ is subanalytic. We apply this to the negative of  
(\ref{4sup}).
\end{proof}

\begin{remark}
\label{note}
The lightning function $f:\mathbb R \to \mathbb R$ 
in \cite{KK02} is an example which has a strict standard model but is not upper $C^1$. 
It is Lipschitz 
with constant $1$ and has $\partial f(x) = [-1,1]$ for every $x$. The standard model of
$f$ is strict, because for all $x,y$ there exists $\rho = \rho(x,y)\in [-1,1]$ such that
\begin{multline*}
f(y) = f(x) + \rho |y-x|  \leq f(x) + {\rm sign}(y-x)(y-x) \\
\leq f(x) + f^\circ(x,y-x)= \phi^\sharp(x,y-x),
\end{multline*}
using the fact that sign$(y-x)\in \partial f(x)$.  At the same time $f$ is certainly not upper-$C^1$, 
because it is not semi-smooth in the sense of \cite{Mif77}.
This shows that the class of functions $f$ with a strict standard model offers
a scope of its own, justifying the effort made in the step finding subroutine.
\end{remark}

\subsection{Convergence analysis for the negative spectral abscissa}
\label{conv:alpha}
While we obtained an ironclad convergence certificate for the
$H_\infty$-programs (\ref{minus_h}), and similarly, for (\ref{eq-wp}), theory is more complicated with
program (\ref{minus_alpha}). 
In our numerical testing $a_-(\delta)=-\alpha\left( A(\delta)\right)$ 
behaves consistently like an upper-$C^1$ function, and 
we expect this to be true at least  if all active eigenvalues
of $A(\delta^*)$ are semi-simple.  
We now argue that we expect $a_-$ to have a strict standard model  as a rule.

Since $A(\delta)$ depends analytically on $\delta$, the eigenvalues
are roots of a characteristic polynomial $p_\delta(\lambda) = \lambda^m + a_1(\delta) \lambda^{m-1} + \dots + a_m(\delta)$
with coefficients $a_i(\delta)$ depending  analytically on  $\delta$. For fixed $d\in \mathbb R^m$,
every eigenvalue $\lambda_\nu(t)$ of $A(\delta^*+td)$ has therefore
a Newton-Puiseux expansion of the form
\begin{equation}
\label{puis}
\lambda_\nu(t) = \lambda_\nu(0) + \sum_{i=k}^\infty \lambda_{\nu,i-k+1}t^{i/p}
\end{equation}
for certain $k,p\in\mathbb N$, where the coefficients $\lambda_{\nu,i}=\lambda_{\nu,i}(d)$ and leading exponent $k/p$
can be determined by the Newton polygon \cite{MBO97}. If all active eigenvalues
of $a_-(\delta)=-\alpha(A(\delta))$ are semi-simple, then $a_-$ is Lipschitz around $\delta^*$ 
by Proposition \ref{prop4}, 
so that necessarily $k/p \geq 1$ in (\ref{puis}). It then follows
that either $a_-'(\delta^*,d)=0$ when $k/p > 1$ for all active $\nu$, or $a_-'(\delta^*,d) = -{\rm Re}\, \lambda_{\nu,1}\leq a_-^\circ(\delta^*,d)$ for  the
active $\nu \in I(\delta^*)$ if $k/p=1$. In both cases $a_-$ satisfies the strictness estimate (\ref{strict}) {\em directionally}, and we expect
$a_-$ to have a strict standard model. Indeed, for $k/p=1$ we have
$a_-(\delta^*+td) \leq a_-(\delta^*) + a_-^{\circ}(\delta^*,d) t  - {\rm Re}\,\lambda_{\nu,2}t^{(p+1)/p} + {\rm o}(t^{(p+1)/p})$, while the case $k/p>1$
gives $a_-'(\delta^*,d)=0$, hence
$a_-^\circ(\delta^*,d) \geq 0$, and so
$a_-(\delta^*+td) \leq a_-(\delta^*)- {\rm Re}\, \lambda_{\nu,1}  t^{k/p} + {\rm o}(t^{k/p})
\leq a_-(\delta^*) + a_-^\circ(\delta^*,d)t - {\rm Re}\, \lambda_{\nu,1}t^{k/p} + {\rm o}(t^{k/p})$. As soon as these estimates hold uniformly over
$\|d\|\leq 1$, $a_-$ has indeed a strict standard model, i.e., 
we have the following

\begin{lemma}
\label{lem}
Suppose every active eigenvalue of $A(\delta^*)$ is semi-simple, and suppose the following two conditions are satisfied:
\begin{equation}
\label{sups}
\begin{array}{l}
\displaystyle
\lim_{t\to 0}
\sup_{\|d\|\leq 1} \sup_{\nu\in I(\delta^*), k/p=1} \sum_{i=k+1}^\infty {\rm Re}\, \lambda_{\nu,i-k+1}(d)t^{i/p-1} \geq 0 \\
\displaystyle\lim_{t\to 0}
\sup_{\|d\|\leq 1} \sup_{\nu\in I(\delta^*), k/p>1} \sum_{i=k}^\infty {\rm Re}\, \lambda_{\nu,i-k+1}(d)t^{i/p-1} \geq 0.
\end{array}
\end{equation}
Then the standard model of  $a_-$ is strict at $\delta^*$. 
\hfill $\square$
\end{lemma}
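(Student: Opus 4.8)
The plan is to verify the strictness estimate (\ref{strict}) for $f=a_-$ in its radial form at the base point $\delta=\delta^*$ and then to render it uniform in the direction. Writing $\eta=\delta^*+td$ with $t=\|\eta-\delta^*\|$ and $\|d\|=1$, and using positive homogeneity of $d\mapsto a_-^\circ(\delta^*,d)$, the estimate (\ref{strict}) at $\delta^*$ becomes the one-variable requirement
\[
a_-(\delta^*+td)\leq a_-(\delta^*)+t\,a_-^\circ(\delta^*,d)+\varepsilon\,t,\qquad \|d\|\leq1,\ 0<t<r,
\]
so everything reduces to bounding the growth of $a_-$ along rays issuing from $\delta^*$, uniformly over $\|d\|\leq1$.

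First I would record the structural input. Since every active eigenvalue at $\delta^*$ is semi-simple, Proposition \ref{prop4} makes $a_-$ locally Lipschitz, and by continuity of the spectrum the abscissa on a small neighbourhood is carried by the active eigenvalues alone. Inserting the Newton-Puiseux expansion (\ref{puis}) of each active branch, for small $t$ one gets
\[
a_-(\delta^*+td)-a_-(\delta^*)=-\max_{\nu\in I(\delta^*)}S_\nu(t,d),\qquad S_\nu(t,d)=\sum_{i=k}^\infty {\rm Re}\,\lambda_{\nu,i-k+1}(d)\,t^{i/p},
\]
with $k/p\geq1$ forced by Lipschitzness. Introducing the one-sided growth rates $g_\nu'(d)={\rm Re}\,\lambda_{\nu,1}(d)$ when $k/p=1$ and $g_\nu'(d)=0$ when $k/p>1$, Proposition \ref{prop4} (through $a_-^\circ(\delta^*,d)=\max_{g\in\partial a_-(\delta^*)}g^Td$) yields $a_-^\circ(\delta^*,d)=\max_{\nu}\big(-g_\nu'(d)\big)=-\min_{\nu}g_\nu'(d)$.

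The core step is to recast the radial estimate as the lower bound
\[
\max_{\nu\in I(\delta^*)}S_\nu(t,d)\geq\big(\min_{\nu}g_\nu'(d)\big)\,t-\varepsilon\,t,
\]
and to exploit that a single favourably chosen branch suffices to realise it. When the leading rates are not all tied, the branch attaining $\max_\nu g_\nu'$ beats $\min_\nu g_\nu'$ by a linear margin that swamps its higher-order tail for small $t$. The decisive case is the tied one, where the inequality collapses to the demand that the normalized Puiseux tails do not drift negative, separately on the $k/p=1$ family (tail $\sum_{i\geq k+1}{\rm Re}\,\lambda_{\nu,i-k+1}(d)\,t^{i/p-1}$) and on the $k/p>1$ family (tail $\sum_{i\geq k}{\rm Re}\,\lambda_{\nu,i-k+1}(d)\,t^{i/p-1}$). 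These are exactly the two conditions packaged in (\ref{sups}); substituting them returns the radial form of strictness at $\delta^*$.

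The main obstacle I anticipate is the \emph{uniformity over the direction} $d$. The Puiseux data $k$, $p$ and $\lambda_{\nu,i}(d)$ depend on $d$ only piecewise analytically, the leading exponent $k/p$ can jump on thin sets of directions, and the number of coalescing active branches changes there, so the interchange of $\sup_{\|d\|\leq1}$, the extrema over $\nu$, and the limit $t\to0$ is delicate; controlling precisely this worst-case behaviour over $d$ and $\nu$ is what the two limiting conditions (\ref{sups}) are engineered to do. A secondary point worth flagging is that the definition preceding the Proposition asks for strictness as a two-variable estimate over all $\delta,\eta\in B(\delta^*,r)$, whereas expansion (\ref{puis}) is anchored at $\delta^*$; the argument above delivers the estimate at the base point $\delta^*$ directly, and the general base point would require the same tail control for the Puiseux expansions at neighbouring points, which the analyticity of $\delta\mapsto A(\delta)$ together with compactness of ${\bf\Delta}$ render plausible but which is where the residual work sits.
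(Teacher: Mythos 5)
Your proposal follows essentially the same route as the paper: the paper's argument for Lemma \ref{lem} is precisely the Newton--Puiseux discussion preceding its statement (hence the lemma closes with $\square$ and carries no separate proof), namely the radial expansion $a_-(\delta^*+td)$ along rays, the identification of the directional derivative with $-{\rm Re}\,\lambda_{\nu,1}$ in the case $k/p=1$ and with $0$ when $k/p>1$, and the reading of conditions (\ref{sups}) as uniform-in-$d$ control of the Puiseux tails. Your closing caveat --- that (\ref{strict}) is a two-variable estimate over all base points $\delta\in B(\delta^*,r)$ while the expansion (\ref{puis}) is anchored at $\delta^*$ --- is a fair observation about a looseness the paper itself leaves unresolved (its own phrasing is only ``as soon as these estimates hold uniformly over $\|d\|\leq 1$''), so you have not introduced any gap beyond what is already present in the source.
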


Even though these conditions are not easy to check, they seem to be verified most of the time, so that
the following result reflects what we observe in practice
for the min-min program of the negative spectral abscissa $a_-$. 

\begin{theorem}[Worst-case spectral abscissa on $\bf \Delta$]
\label{theorem2}
Let $\delta^j\in {\bf \Delta}$ be the sequence generated by Algorithm
{\rm \ref{algo-outer}} for program {\rm (\ref{minus_alpha})}, where the step finding subroutine is carried out
with step {\rm 4} activated. Suppose every accumulation point $\delta^*$ of the sequence $\delta^j$ is simple or semi-simple and satisfies condition
{\rm (\ref{sups})}. Then  the sequence converges  
to a unique KKT point of program {\rm (\ref{minus_alpha})}.
\end{theorem}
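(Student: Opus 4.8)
The plan is to follow the architecture of the proof of Theorem~\ref{theorem1}, replacing the upper-$C^1$ property used there — which is unavailable for the spectral abscissa — by the weaker property that the standard model of $a_-$ is \emph{strict} at every accumulation point. First I would fix an accumulation point $\delta^*$ of the serious iterates $\delta^j$ and invoke Lemma~\ref{lem}: since by hypothesis all active eigenvalues of $A(\delta^*)$ are simple or semi-simple and condition (\ref{sups}) holds, the standard model $\phi^\sharp$ of $a_-$ is strict at $\delta^*$ in the sense of (\ref{strict}). By Proposition~\ref{prop4}, semi-simplicity moreover makes $a_-$ Clarke subdifferentiable in a neighborhood of $\delta^*$, so the working models $\phi_k^\sharp$ assembled in Subroutine~\ref{algo-prox} from finite sets $\mathcal G_k \subset \partial a_-(\delta^j)$ are well defined.

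Next I would appeal to the convergence analysis of Algorithm~\ref{algo-outer} carried out in \cite{Noll12,NPR08}, now applied to $f = a_-$ with step~4 of the subroutine activated (\texttt{flag}~$=$~\texttt{strict}). The point is that for $h_-$ the upper-$C^1$ property supplied the stronger estimate (\ref{stronger}), which allowed the step finding to collapse to a linesearch; for $a_-$ only strictness of $\phi^\sharp$ is available, so step~4 — and with it the cutting plane and aggregate plane mechanism — must be retained. Under strictness of the standard model the original argument of \cite[Theorem~2]{NPR08} applies verbatim: the inner loop terminates after finitely many backtracks whenever $\delta^j$ is not already stationary, and every accumulation point $\delta^*$ of the serious sequence is a critical point of $a_-$. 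Because the iterates stay feasible, $\delta^j \in {\bf \Delta}$, and the tangent program (TP) is solved over ${\bf \Delta}$, this criticality sharpens to the KKT conditions of (\ref{minus_alpha}), exactly as in the proof of Theorem~\ref{theorem1}.

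To upgrade ``every accumulation point is a KKT point'' to convergence of the whole sequence toward a \emph{unique} KKT point, I would reuse the \L ojasiewicz argument through \cite[Cor.~1]{Noll12}, for which I need $a_-$ subanalytic near $\delta^*$. Since $A(\delta) = \mathcal A + \mathcal C \Delta (I - \mathcal D \Delta)^{-1} \mathcal B$ is analytic in $\delta$, the coefficients $a_i(\delta)$ of its characteristic polynomial $p_\delta$ are analytic, so the graph $\{(\delta, x, y) : p_\delta(x + iy) = 0\}$ is subanalytic; the eigenvalues remain bounded for $\delta$ in the compact set ${\bf \Delta}$, whence taking the supremum of the real part $x$ over this relatively compact fiber preserves subanalyticity. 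Thus $a_+ = \alpha(A(\cdot))$, and with it $a_- = -a_+$, is subanalytic and satisfies the \L ojasiewicz inequality \cite{BDL06}, yielding convergence to a single KKT point.

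The main obstacle is conceptual rather than computational: unlike $h_-$, the negative spectral abscissa cannot be shown upper-$C^1$ unconditionally, so strictness of its standard model — the hinge of the entire convergence proof — is not free but must be imported as a hypothesis. This is precisely the content of condition (\ref{sups}), which forces the higher-order Newton-Puiseux terms in (\ref{puis}) to behave uniformly over directions $\|d\| \leq 1$. The theorem is therefore only as strong as this assumption, and the honest difficulty lies in recognizing that no unconditional replacement for it is available in the semi-simple (non-simple) regime.
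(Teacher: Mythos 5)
Your proposal is correct and follows essentially the same route as the paper: strictness of the standard model of $a_-$ at accumulation points via Lemma~\ref{lem} under condition~(\ref{sups}), the convergence machinery of \cite{NPR08,Noll12} with step~4 of the subroutine activated, and finally \cite[Cor.~1]{Noll12} together with subanalyticity of $a_-$ and the \L ojasiewicz inequality to obtain convergence to a single KKT point. The paper's own proof is a one-line appeal to \cite[Cor.~1]{Noll12}; you have merely made explicit the supporting steps it leaves implicit, including a reasonable justification of the subanalyticity of the spectral abscissa.
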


\begin{proof}
We apply once again \cite[Corollary 1]{Noll12}, using the fact that
$a_-$ satisfies the \L ojasiewicz inequality at all accumulation points.
\end{proof}

\begin{remark} 
Convergence certificates for minimizing $a_-$ or $a_+$ 
seem to hinge on additional hypotheses which are hard to
verify in practice. 
In  \cite{BLO02} the authors propose  the gradient sampling algorithm
to minimize $a_+$, and their subsequent convergence analysis in \cite{BLO05} needs at least
local Lipschitzness of $a_+$, which is observed in practice but difficult to verify
algorithmically. A similar comment applies to the hypotheses of Theorem \ref{theorem2}, which appear to be 
satisfied in practice, but remain difficult to check directly.\end{remark}

\subsection{Multiple performance measures}
Practical applications often feature several design requirements  combining $H_\infty$ and
$H_2$ performances with spectral constraints related to pole locations. The results in section \ref{conv:hinf}
easily extend to this case upon defining 
$H(\kappa,\delta) := \max_{i\in I} h_i\left(T_{z_i,w_i}(\kappa,\delta)\right)$, where
several performance channels $w_i \to z_i$ are assessed
against various requirements  $h_i$,  as  in \cite{AGB14,Apk13}. 
All results developed so far carry over to multiple requirements, because 
the worst-case  multi-objective performance in step 4 of Algorithm \ref{algo1} 
involves $H_-=-H$ which has the same min-min structure as before. 

\newcommand{\vvline}{\vrule width 1pt}
\newcommand{\hhline}{\noalign{\hrule height 1pt}}

\section{Experiments}
\subsection{Algorithm testing}
\label{sec:testing}
In this section our dynamic inner  relaxation technique (Algorithm \ref{algo1}) is tested on a bench of  $14$ 
examples of various sizes and structures.
All test cases have been taken and adapted from the literature and are described in Table \ref{tab-list}. 
Some tests have been made more challenging by adding uncertain parameters in order to 
illustrate the potential of the technique for higher-dimensional parametric domains ${\bf \Delta}$. 
The notation  $[r_1\; r_2\; \dots \; r_m]$ in the rightmost column of the table stands for 
the block sizes in $\Delta = {\rm diag}\left[ \delta_1 I_{r_1},\dots, \delta_m I_{r_m}\right]$. 
Uncertain parameters have been normalized so that ${\bf \Delta} = [-1,1]^m$, and the nominal value
is $\delta = 0$. 

The dynamic relaxation technique of {Algorithm \ref{algo1}} is first compared to static relaxation (\ref{static}). 
That technique consists  in choosing a dense enough static grid ${\bf \Delta}_s$
of the uncertainty box ${\bf \Delta}$ and to 
perform a multi-model synthesis for a large number card$({\bf\Delta_s})$
of models. In consequence, static relaxation cannot be considered a practical
approach. Namely,

\begin{itemize}
\item Dense grids become quickly intractable  for high-dimensional ${\bf \Delta}$.
\item Static relaxation may lead to overly optimistic answers in terms of worst-case performance  
if critical parametric configurations are missed by gridding.
\end{itemize}
This is what is observed in Table \ref{tab-comp},  where we have used a $5^m$-point grid with 
$m= {\rm dim}(\delta)$ the number of uncertain parameters. 
Worst-case performance is missed in tests $6$, $9$, $12$ and $14$, as we
verified by {Algorithms $2$}.
Running times may rise to hours or even days for cases $1$, $2$, $5$ and $10$. 
On the other hand, when gridding turns out right, then {Algorithm \ref{algo1}} and static relaxation are equivalent. 
In this respect, the dynamic relaxation of {Algorithm \ref{algo1}} can be regarded as a cheap, and therefore very successful,  
way to cover the uncertainty box. The number of scenarios in ${\bf \Delta}_a$ rarely exceeds  $10$  in our testing.
Computations were performed using Matlab R2013b on OS Windows 7 Home Premium with CPU Intel Core i5-2410M
running at 2.30 Ghz and 4 GB of RAM. 

The results achieved by {Algorithm \ref{algo1}} can be certified
{\em a posteriori} 
 through the mixed $\mu$ upper bound \cite{FTD91}. 
This technique computes an overestimate $\overline\mu_p$ of the worst-case performance on the unit cube
${\bf \Delta}$. We introduce
the ratio $\rho := \overline \mu_p/ h_\infty$, where $h_\infty$ is the underestimate of $\mu$
predicted by our  Algorithm \ref{algo1}, given in column 4 of Table
\ref{tab-comp}. Clearly $h_\infty \leq \bar{\mu}_p$, or what is the same, $\rho \geq 1$,
so that values $\rho\approx 1$  
certify the values  predicted by Algorithm \ref{algo1}. 
Note that a value $\rho \gg 1$ indicates failure
to certify the value $h_\infty$ a posteriori, but such a failure could be due {\em either} to
a sub-optimal result of Algorithm \ref{algo1}, {\em or} to conservatism of
the upper bound $\bar{\mu}_p$. 
This was {\em not} observed  in our present testing,
so that  Algorithm \ref{algo1} was certified in all cases. 
For instance, in row 2 of Table \ref{tab-dksyn} we have a guaranteed
performance for parameters in ${\bf \Delta}/1.01$. 

Our last comparison is between Algorithm \ref{algo1}
and {\tt DKSYN} for complex and real $\mu$ synthesis, 
and the results are shown in Table \ref{tab-dksyn}. A value $\overline{\mu}_{\mathbb R}=b$ in column 6
of that table
means worst-case performance of $b$ is guaranteed
on the cube $(1/b)\,[-1,1]^m$. 
It turned out that
no reasonable certificates were to be obtained with $\overline{\mu}_\mathbb R$ synthesis, 
since $b=\overline{\mu}_\mathbb R \gg 1$ as a rule,
 so that $(1/b)[-1,1]^m$ became too small to be of use,  except for test cases $8$, $9$ and $13$.  
 In this test bench, Algorithm \ref{algo1} achieved better worst-case performance 
on a larger uncertainty box with simpler controllers. It also proves competitive
in terms of execution times. 

\begin{table}[h]
\caption{Test cases \label{tab-list} }
\centering
\setlength{\tabcolsep}{2pt}
\begin{tabular}{!\vvline c !\vvline c | c | c | c !\vvline}

\hhline
N\textsuperscript{o} & Benchmark name & Ref. & States & Uncertainty block structure \\ \hhline  
1 & Flexible Beam & \cite{DFT92} & 8 & [1\; 1\; 1\; 3\; 1] \\ \hline
2 & Mass-Spring-Dashpot & \cite{Res91} & 12 & [1\; 1\; 1\; 1\; 1\; 1] \\ \hline
3 & DC Motor & \cite{CK09} & 5 & [1\; 2\; 2] \\ \hline
4 & DVD Drive & \cite{FSBS03} & 5 & [1\; 3\; 3\; 3\; 1\; 3] \\ \hline
5 & Four Disk & \cite{Enn84} & 10 & [1\; 3\; 3\; 3\; 3\; 3\; 1\; 1\; 1\; 1] \\ \hline  
6 & Four Tank & \cite{VGD01} & 6 & [1\; 1\; 1\; 1] \\ \hline
7 & Hard Disk Drive & \cite{GPK05} & 18 & [1\, 1\, 1\, 2\, 2\, 2\, 2\, 1\, 1\, 1\, 1] \\ \hline
8 & Hydraulic Servo & \cite{CD94} & 7 & [1\; 1\; 1\; 1\; 1\; 1\; 1\; 1] \\ \hline
9 & Mass-Spring System & \cite{ACAGF99} & 4 & [1\; 1] \\ \hline
10 & Tail Fin Controlled Missile & \cite{Kru93} & 23 & [1\; 1\; 1\; 6\; 6\; 6] \\ \hline
11 & Robust Filter Design 1 & \cite{SK08} & 4 & [1] \\ \hline
12 & Robust Filter Design 2 & \cite{Kim06} & 2 & [1\; 1] \\ \hline
13 & Satellite & \cite{NTA04} & 5 & [1\; 6\; 1] \\ \hline
14 & Mass-Spring-Damper & \cite{RCT2013b}  & 8 & [1]  \\ \hhline
\end{tabular}
\end{table}%

\begin{table}[h]
\centering
\caption{Comparisons of  Algorithm \ref{algo1} with static relaxation  on unit box  running times in sec.,  I: intractable\label{tab-comp} }
\setlength{\tabcolsep}{2pt}
\begin{tabular}{!\vvline c !\vvline r !\vvline r | r | r !\vvline r | r | r !\vvline}
\hhline
\multirow{2}{*}{N\textsuperscript{o}} & \multicolumn{1}{c!\vvline}{\multirow{2}{*}{order}} & \multicolumn{3}{c!\vvline}{Algorithm \ref{algo1}} 
& \multicolumn{3}{c!\vvline}{Static relaxation} \\ \cline{3-8}
& & \multicolumn{1}{c|}{\scriptsize\# scenarios} & \multicolumn{1}{c|}{\scriptsize $H_\infty$ norm} & \multicolumn{1}{c!\vvline}{\scriptsize time} 
& \multicolumn{1}{c|}{\scriptsize\# scenarios} & \multicolumn{1}{c|}{\scriptsize $H_\infty$ norm} & \multicolumn{1}{c!\vvline}{\scriptsize time} \\ \hhline 
1 & 3 & 4 &      1.290 &     25.093 & 3125 & I & $\infty$ \\ \hline
2 & 5 & 16 &      2.929 &    261.754 & 15625 & I & $\infty$ \\ \hline
3 & PID & 2 &      0.500 &      6.256 & 125 &      0.500 &    127.952 \\ \hline
4 & 5 & 1 &     45.455 &      2.012 & 15625 &     45.454 &   4908.805 \\ \hline
5 & 6 & 6 &      0.672 &     68.768 & 9765625 & I & $\infty$ \\ \hline  
6 & 6 & 4 &      5.571 &     41.701 & 625 &      5.564 &   3871.898 \\ \hline
7 & 4 & 4 &      0.026 &     34.647 & 48828125 & I & $\infty$ \\ \hline
8 & PID & 3 &      0.701 &     10.140 & 390625 & I & $\infty$ \\ \hline
9 & 4 & 4 &      0.814 &     22.917 & 25 &      0.759 &     67.268 \\ \hline
10 & 12 & 6 &      1.810 &    159.299 & 15625 & I & $\infty$ \\ \hline
11 & 4 & 4 &      2.636 &     16.723 & 5 &      2.636 &      6.958 \\ \hline
12 & 1 & 3 &      2.793 &      8.221 & 25 &      2.660 &     23.400 \\ \hline
13 & 6 & 5 &      0.156 &     48.445 & 125 &      0.156 &    876.039 \\ \hline
14 & 5 & 3 &      1.651 &     39.250 & 5 &      1.644 &     27.456 \\ \hhline
\end{tabular}
\end{table}%

\begin{table}[h]
\centering 
\caption{Comparisons between {\tt DKSYN} (complex and real) $\mu$ synthesis and 
dynamic relaxation  on the same uncertainty box  \label{tab-dksyn}} 
\setlength{\tabcolsep}{2pt}
\begin{tabular}{!\vvline c !\vvline r | r | r !\vvline r | r | r !\vvline  c !\vvline}
\hhline
\multirow{2}{*}{N\textsuperscript{o}} & \multicolumn{3}{c!\vvline}{complex $\mu$ syn.} 
& \multicolumn{3}{c!\vvline}{real $\mu$ syn.} & \multicolumn{1}{c!\vvline}{Algorithm \ref{algo1}} \\ \cline{2-8}
& \multicolumn{1}{c|}{ord.} & \multicolumn{1}{c|}{$\overline{\mu}_{\mathbb C}$} & \multicolumn{1}{c!\vvline}{time} 
& \multicolumn{1}{c|}{ord.} & \multicolumn{1}{c|}{$\overline{\mu}_{\mathbb R}$} & \multicolumn{1}{c!\vvline}{time}
& \multicolumn{1}{c!\vvline}{$\rho=\bar{\mu}_p/h_\infty$}\\ \hhline 
1 & 38 &      2.072 &     80.231 & 88 &      1.835 &     86.144   & 1.00 \\ \hline
2 & 54 &      2.594 &     123.288 & 66 &      2.586 &    141.181 &  1.01 \\ \hline
3 & 51 &     17.093 &     76.799 & 65 &     16.854 &     27.269 &  1.01\\ \hline
4 & 5 &     72.464 &     27.113 & 5 &     45.455 &     53.898 & 1.00\\ \hline
5 & 10 &      5.151 &    131.259 & 10 &      1.894 &    315.949 &  1.01\\ \hline  
6 & 6 &      4.558 &     17.519 & 12 &      4.555 &     29.469 &   1.01\\ \hline
7 & 18 &     50.451 &    159.152 & F & F & F  & 1.01 \\ \hline
8 & 61 &      0.963 &    100.636 & 61 &      0.878 &    133.740  & 1.05 \\ \hline
9 & 24 &      0.921 &     47.565 & 28 &      0.989 &    112.820 &  1.04\\ \hline
10 & 147 &      5.639 &   1412.402 & 337 &      2.834 &   7611.679   & 1.04\\ \hline
11 & 14 &      1.804 &     13.759 & 14 &      1.782 &     22.293   & 1.02\\ \hline
12 & 10 &      2.268 &     16.021 & 16 &      2.323 &     21.310 &  1.01 \\ \hline
13 & 133 &      0.821 &    183.052 & 255 &      0.509 &    257.589 &  1.04 \\ \hline
14 & 14 &      1.523 &     16.583 & 16 &      1.562 &     46.722 & 1.00 \\ \hhline
\end{tabular}
\end{table}%

\subsection{Tail fin controlled missile}
\label{sec:missile}
We now illustrate
our robust synthesis technique in more depth for a tail fin controlled missile. 
This problem is adapted from \cite[Chapter IV]{Kru93} and has been made more challenging by adding parametric uncertainties
in the most critical parameters. 
The linearized rigid body dynamics of the missile are 
\[
\begin{array}{l l l}
\begin{bmatrix} \dot\alpha \\ \dot q \end{bmatrix}
& =\begin{bmatrix}
  Z_\alpha   &  1  \\
  M_\alpha   & M_q   
\end{bmatrix}
\begin{bmatrix} \alpha \\ q \end{bmatrix}
& +\begin{bmatrix} Z_d \\ M_d \end{bmatrix} u \\ 
\begin{bmatrix} \eta \\ q \end{bmatrix}
& =\begin{bmatrix}
  V/kG \, Z_\alpha   & 0 \\
  0 &   1   
\end{bmatrix}
\begin{bmatrix} \alpha \\ q \end{bmatrix}
& +\begin{bmatrix} V/kG \, Z_d \\ 0 \end{bmatrix} u 
\end{array}
\]
where $\alpha$ is the angle of attack, $q$ the pitch rate, $\eta$ the vertical acceleration and $u$ the fin deflection.
Both $\eta$ and $q$ are measured through appropriate devices as described below. A more realistic model also includes bending  modes of the missile structure. In this application, we have $3$ bending modes whose contribution to 
$\eta$ and $q$ is additive and described as follows:
\[
\begin{bmatrix} \eta_i(s) \\q_i(s) \end{bmatrix} = \frac{1}{s^2+2\zeta \omega_i s +  \omega_i^2}
\begin{bmatrix} s^2 \Xi_{\eta_i} \\  s \Xi_{q_i} \end{bmatrix}, \; i = 1,2,3 \,.
\]

It is also important to account  for actuator and detector dynamics. The actuator is modeled as a 2nd-order transfer function with
damping $0.7$ and natural frequency $188.5$ rad./sec. Similarly, the accelerometer and pitch rate gyrometer are 2nd-order transfer functions with damping $0.7$ and natural frequencies $377$ rad./sec. and $500$ rad./sec., respectively. 

Uncertainties affect both rigid and flexible dynamics and the deviations from nominal are 
30\% for $Z_\alpha$, 15\% for $M_\alpha$, 30\% for $M_q$, and 10\% for each $\omega_i$.
This leads to an uncertain model with uncertainty structure given as
\[
\Delta = {\rm diag}\left[ \delta_{ Z_\alpha},\delta_{ M_\alpha},\delta_{ M_q},\delta_{ \omega_1}I_6,
\delta_{ \omega_2}I_6,\delta_{ \omega_3}I_6\right] \,,
\]
which corresponds to $\delta\in \mathbb R^6$
and repetitions $[1\;1\;1\;6\;6\;6]$ in the terminology of Table \ref{tab-list}.
The controller structure includes both feed-forward $K_{\rm ff}(s)$ and feedback $K_{\rm fb}(s)$ actions 
\[
u_c =  K_{\rm ff}(s) \eta_r + K_{\rm fb}(s) \begin{bmatrix} \eta_r-\eta_m \\ -q_m \end{bmatrix} =
 K(s)  \begin{bmatrix} \eta_r-\eta_m \\q_m \\ \eta_r \end{bmatrix}\,,
\]
where $\eta_r$ is the acceleration set-point and $\eta_m$, $q_m$ are the detectors outputs. 
The total number of design parameters $\kappa$ in $K(\kappa,s)$
is ${85}$, as a tridiagonal state space representation of a $12$-th order controller was used.

The missile autopilot is optimized over $\kappa\in \mathbb R^{85}$ to meet the following requirements:
\begin{itemize}
\item The acceleration $\eta_m$ should track the reference input $\eta_r$ 
with a rise time of about $0.5$ seconds. In terms of
the transfer function  from $\eta_r$ to the tracking error  $e:=\eta_r-\eta_m$ 
this is expressed as $||W_e(s)T_{e\eta_r}||_\infty \leq 1$,  where the weighting
function $W_e(s)$ is 
\[
W_e(s):= 1/M\frac{s/\omega_B + M}{s/\omega_B + A},\; A = 0.05,\, M = 1.5,\, \omega_B = 10 \,.
\]
\item Penalization of the high-frequency rate of variation of the control signal and roll-off are captured through the constraint
$||W_u(s)T_{u\eta_r}||_\infty \leq 1$, where $W_u(s)$ is a high-pass weighting
$W_u(s):=\left(s/100(0.001 s + 1)\right)^2$.

\item Stability margins at the plant input are specified through the  $H_\infty$ constraint $\|W_o(s) S(s) W_i(s)\|_\infty \leq 1$, 
where $S$ is the input sensitivity function
$S:= (I+K_{\rm fb}G)^{-1}$ and with static weights $W_o = W_i = 0.4$. 
\end{itemize}

Finally, stability and performance requirements must hold for the entire range of parametric uncertainties, 
where $\bf \Delta$ is 
   the $\mathbb R^{6}$-hyperbox with limits in percentage given above. The resulting nonsmooth program 
$v^*$ to be solved in step 2 of Algorithm \ref{algo1} takes the form 
\[
\min_{\kappa\in \mathbb R^{85}} \max_{\delta \in {\bf \Delta}_a\subset \mathbb R^6} \|T_{zw}\left( \delta,\kappa\right)\|_\infty.
\]
We have observed experimentally
that controllers $K(s)$ of order  greater than $12$ do not improve much. The order of the augmented plant 
including flexible modes, detector and actuator dynamics, and
weighting filters is $n_x=23$. 

The evolution of the worst-case $H_\infty$ performance vs. iterations in {Algorithm \ref{algo-outer}} 
(and its {Subroutine \ref{algo-prox}}) 
is problem-dependent. 
For the missile example, a destabilizing  uncertainty is found at the $1$st iteration. 
 The algorithm then settles very quickly in $5$
 iterations on a 
 final set ${\bf \Delta}_a$ consisting of $6$ scenarios. The number of scenarios in the final ${\bf \Delta}_a$ coincides with the number of iterations in Algorithm \ref{algo1} plus the nominal scenario, and can be seen in column 3 of Table \ref{tab-comp}.
Note that the evolution  of the worst-case $H_\infty$ performance is not always monotonic. Typically 
the curve may bounce back when a bad parametric configuration $\delta$ is discovered by the algorithm. 
This is the case e.g.  for the mass-spring example. 

The achieved values of the $H_\infty$ norm and corresponding running times are given in Table \ref{tab-comp}. 
Responses to a step reference input for $100$ models from the uncertainty set  ${\bf \Delta}$ 
are shown in Fig. \ref{fig-TimeSimu} to validate  the robust design. 
Good tracking is obtained over the entire parameter range. 
The magnitude of the  $3$ controller gains  of  $K(s)$ are plotted in Fig. \ref{fig-SigmaK}.   
Robust roll-off and notching of flexible modes are clearly achieved. Potential issues due to pole-zero cancellations are avoided as a consequence
of allowing parameter variations in the model. Finally, Fig. \ref{fig-Nichols} displays the Nichols plots  for $100$ models sampled in the uncertainty  set. We observe that good  "rigid" margins as well as  attenuation 
of the flexible modes over $\bf \Delta$ has been achieved.  

\begin{remark}
Real $\mu$ synthesis turned out  time-consuming, exceeding two hours in the missile example. 
The controller order inflates to $337$ and conservatism is still present as compared to dynamic relaxation
via Algorithm \ref{algo1}. 
A value $\overline{\mu}_{\mathbb R} = 2.834$ reads as a worst-case $H_\infty$ performance of $2.834$ 
over the box ${\bf \Delta} = 1/2.834\, [-1,1]^m$.   To resort to interpreting
uncertain parameters as complex cannot  be considered an acceptable 
workaround  either. 
Even when it delivers a result,  this approach 
as a rule leads to  high-order controllers ($147$ states in the missile example).
Complex $\mu$ synthesis is also fairly conservative, as we expected. 
It appears that scaling- or multiplier-based approaches
using outer relaxations \cite{Young96,PDB93}  encounter two
typical difficulties:
\begin{itemize}
\item The number and repetitions of parametric uncertainties  lead
to conservatism. 
\item Repetitions of the parameters lead to high-order multipliers, which in turn produce high-order controllers. 
\end{itemize}
Our approach is not affected by these issues.
\end{remark}

\begin{remark}
Static relaxation remains intractable even for a coarse grid of $5$ points in each dimension. See Table \ref{tab-comp}. 
\end{remark}

\begin{figure}[htp]
\centering
\includegraphics[width = 0.8\columnwidth]{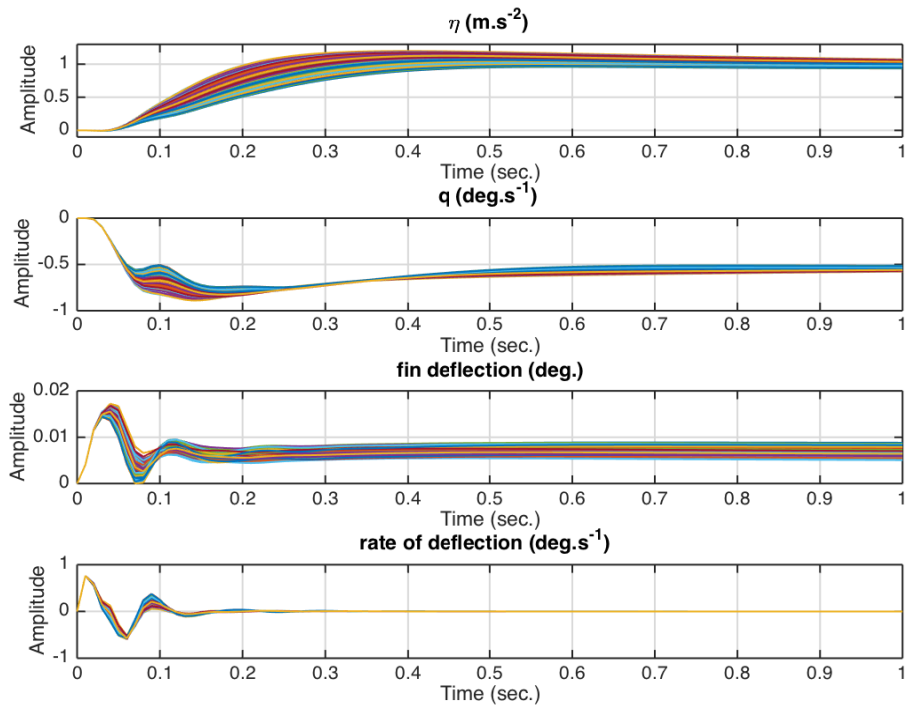}
\caption{Step responses of controlled missile for $100$ sampled models in uncertainty range} 
\label{fig-TimeSimu}
\end{figure}

\begin{figure}[htp]
\centering
\includegraphics[width = 0.8\columnwidth]{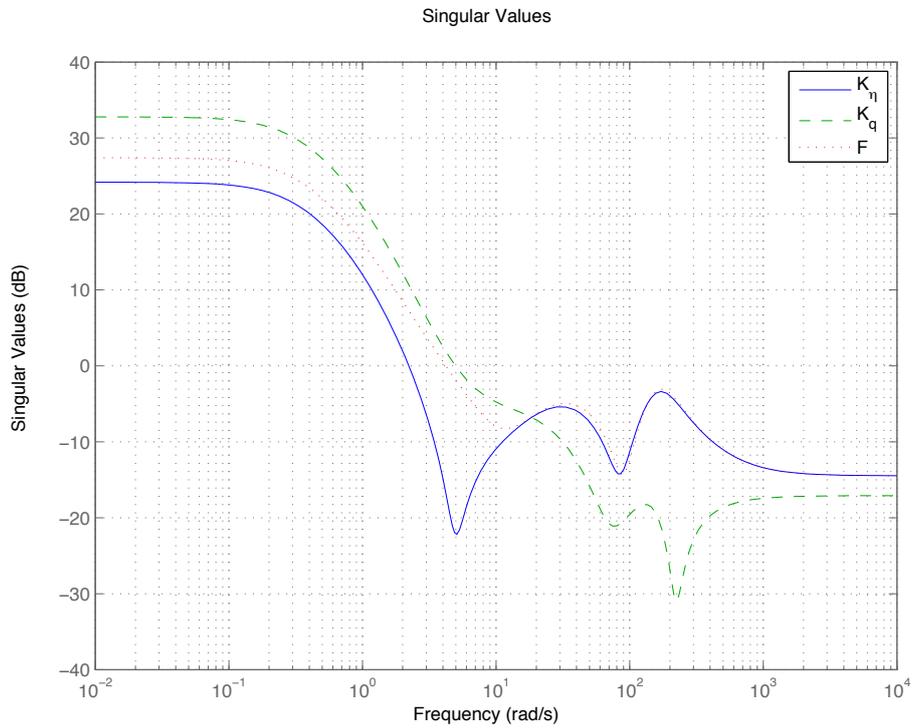}
\caption{Feedback and feed-forward gains} 
\label{fig-SigmaK}
\end{figure}

\begin{figure}[htp]
\centering
\includegraphics[width = 0.8\columnwidth]{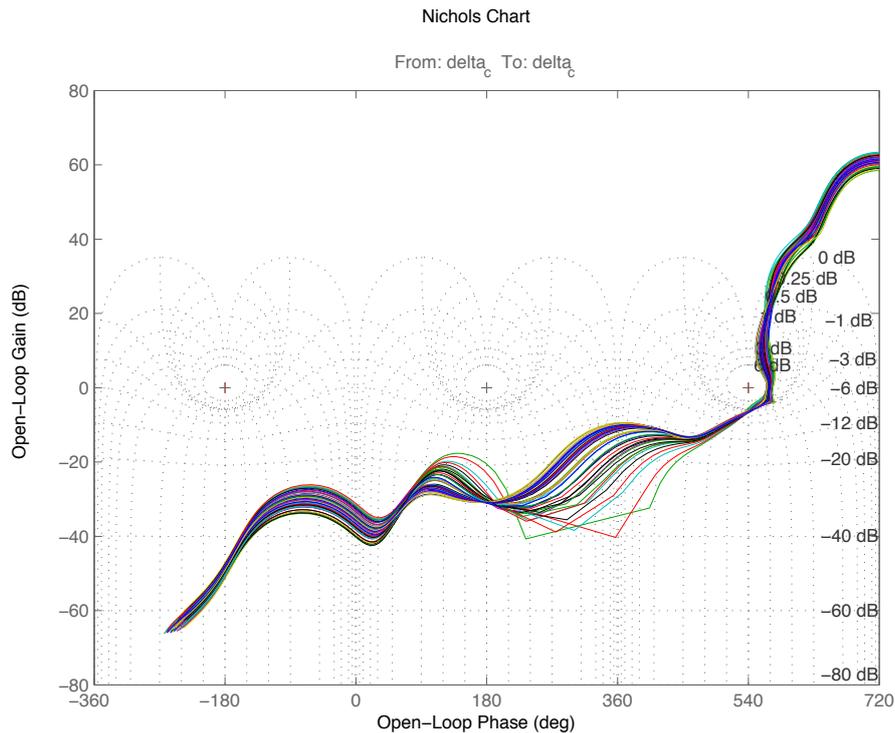}
\caption{Nichols plots for $100$ sampled models in uncertainty range} 
\label{fig-Nichols}
\end{figure}

\section{Conclusion}
We have presented a novel algorithmic approach to
parametric robust $H_\infty$ control  with structured controllers.
A new inner relaxation technique termed {\em dynamic inner approximation},
adapting a set of parameter scenarios $\Delta_a$ iteratively,
was developed and shown to work rapidly without introducing
conservatism. Global robustness and performance certificates are then best
obtained {\em a posteriori} by applying analysis tools based on outer approximations.
At the core our new method is leveraged by sophisticated nonsmooth optimization techniques tailored to
the class of upper-$C^1$ stability and performance functions.  
The  approach was tested on a bench of
challenging examples, and within a case study. 
The results indicate that the proposed technique is a valid practical tool,
capable of solving challenging design 
problems with parametric uncertainty.

\bibliographystyle{IEEEtran}
\bibliography{}

\end{document}